\documentclass[12pt]{article}

\usepackage{amsthm,amsmath,amssymb,enumerate}
\usepackage[colorlinks=true,linkcolor=blue, citecolor=blue,
urlcolor=blue, linktocpage=true]{hyperref}
\usepackage{fullpage}
\usepackage{natbib}

\theoremstyle{plain}  %default 
\newtheorem{theorem}{Theorem}[section] 
\newtheorem{lemma}[theorem]{Lemma} 
\newtheorem{proposition}[theorem]{Proposition}

\theoremstyle{definition} 
\newtheorem{definition}[theorem]{Definition}
\newtheorem{example}[theorem]{Example}
\newtheorem{remark}[theorem]{Remark}

\numberwithin{equation}{section}

% BOLD LETTERS
\newcommand{\E}{\mathbf{E}}
\renewcommand{\P}{\mathbf{P}}
\newcommand{\bX}{{\boldsymbol{X}}}
\newcommand{\bY}{{\boldsymbol{Y}}}
\newcommand{\bZ}{{\boldsymbol{Z}}}
\newcommand{\bC}{{\boldsymbol{C}}}
\newcommand{\bB}{{\boldsymbol{B}}}
\renewcommand{\bB}{{\mathbb{B}}}
\newcommand{\BX}{\bB_\bX}
\newcommand{\BY}{\bB_\bY}
\newcommand{\Prob}[1]{\P\left\{#1\right\}}
\newcommand{\one}{{\boldsymbol{1}}}

% SCRIPT LETTRERS
\newcommand{\sB}{\mathcal{B}}

\newcommand{\sF}{\mathcal{F}}

%MATHBB LETTRERS - SPACES - DOUBLE LETTERS
\newcommand{\R}{\mathbb{R}} %this is an exception
\newcommand{\Rb}{\bar{\R}}

\newcommand{\Sphere}[1][d-1]{\mathbb{S}^{#1}}
\newcommand{\cone}{\mathbb{C}}

% SERIF - functionals
\newcommand{\ve}{\mathsf{e}}
\newcommand{\ue}{\mathsf{u}}
\newcommand{\gae}{\mathsf{g}}
\newcommand{\Tg}{\mathsf{T_g}}
\newcommand{\Gae}{\mathsf{G}}
\newcommand{\quantile}[1][\alpha]{\mathsf{q}_{#1}}

%FURTHER 
\newcommand{\salg}{\mathfrak{F}}
\newcommand{\ssalg}{\mathfrak{A}}
\newcommand{\Lp}[1][p]{\mathsf{L}^{#1}}
\newcommand{\Lpb}[1][p]{\overline{\mathsf{L}}^{#1}}
\newcommand{\Mp}[1][p]{\mathsf{M}^{#1}}
\newcommand{\core}{\mathsf{m}}
\newcommand{\chull}{\mathsf{M}}

% OPERATORS
\DeclareMathOperator{\clo}{cl}
\DeclareMathOperator{\esssup}{ess\,sup}
\DeclareMathOperator{\essinf}{ess\,inf}

%QUERIES
\usepackage{fancybox}
\setlength{\fboxsep}{1.5ex}
\newlength{\querylen}
\setlength{\querylen}{\textwidth}
\addtolength{\querylen}{-2\fboxsep}

% COMMENTS
\usepackage[colorinlistoftodos,textsize=tiny]{todonotes}
\newcommand{\Comments}{1}
\newcommand{\mynote}[2]{\ifnum\Comments=1\textcolor{#1}{#2}\fi}
\newcommand{\mytodo}[2]{\ifnum\Comments=1%
  \todo[linecolor=#1!80!black,backgroundcolor=#1,bordercolor=#1!80!black]{#2}\fi}

\ifnum\Comments=1               % fix margins for todonotes
  \setlength{\marginparwidth}{1in}
\fi

\ifnum\Comments=1               % fix margins for todonotes
  \setlength{\marginparwidth}{1in}
\fi

\ifnum\Comments=1               % fix margins for todonotes
  \setlength{\marginparwidth}{1in}
\fi

\renewcommand{\subset}{\subseteq}

\begin{document}

\title{Set-valued conditional functionals of random sets}

\author{Tobias Fissler\footnotemark[1] \and Ilya
  Molchanov\footnotemark[2]}
\footnotetext[1]{tobias.fissler@math.ethz.ch, RiskLab, ETH Zurich, 8092 Zurich, Switzerland}
\footnotetext[2]{ilya.molchanov@unibe.ch, Institute of Mathematical
  Statistics and Actuarial Science, University of Bern, 3012 Bern,
  Switzerland}

\date{\today}
\maketitle

\begin{abstract}
  Many key quantities in statistics and probability theory such as the
  expectation, quantiles, expectiles and many risk measures are
  law-determined maps from a space of random variables to the reals.
  We call such a law-determined map, which is normalised, positively
  homogeneous, monotone and translation equivariant, a gauge function.
  Considered as a functional on the space of distributions, we can
  apply such a gauge to the conditional distribution of a random
  variable. This results in conditional gauges, such as conditional
  quantiles or conditional expectations.  In this paper, we apply such
  scalar gauges to the support function of a random closed convex set
  $\bX$. This leads to a set-valued extension of a gauge function.  We
  also introduce a conditional variant whose values are themselves
  random closed convex sets. In special cases, this functional becomes
  the conditional set-valued quantile or the conditional set-valued
  expectation of a random set.  In particular, in the unconditional
  setup, if $\bX$ is a random translation of a deterministic cone and
  the gauge is either a quantile or an expectile, we recover the cone
  distribution functions studied by Andreas Hamel and his co-authors.
  In the conditional setup, the conditional quantile of a random
  singleton yields the conditional version of the half-space
  depth-trimmed regions.\\
  
  \noindent 
  Keywords: Random set $\cdot$ Quantile $\cdot$ Random cone $\cdot$
  Depth-trimmed region $\cdot$ Gauge function $\cdot$ Conditional distribution\\

\noindent 
  Mathematics Subject Classification: 60D05 $\cdot$ 62H05 $\cdot$ 91G70
  
%  JEL Classification
\end{abstract}

\section{Introduction}
\label{sec:introduction}

Due to the lack of a natural total order, many classical concepts for
random variables are not directly applicable for random vectors. In
the course of extending such concepts to random vectors the classical definitions often
acquire new features. The most well-known example is that of the
median and of quantiles: while on the line they are numbers, in the space
they are sometimes defined as sets or collections of numbers depending
on direction, see \cite{hallin10} and
\cite{paindaveine21}. Alternatively, it is possible to work with
partial orders as done by \cite{belloni11}, or to apply tools from the
theory of optimal transport as pursued by \cite{carlier16:_vector}, or
to derive a multivariate median from a solution of a certain minimisation
procedure, see \cite{chaudhuri96}.

A useful replacement of quantiles in multivariate statistics is
provided by \emph{depth-trimmed regions}. In this course, the
distribution of a random vector or its empirical counterpart is
associated with a nested family of sets, depending on the parameter
$\alpha \in (0,1]$. These depth-trimmed regions are exactly the upper
level sets of the depth function, which allocates to each point its
depth with respect to the underlying probability distribution. The
first such concept is the half-space depth introduced by \cite{tuk75}
(see \cite{nag:sch:wer19} for a recent survey),
later followed by an axiomatic approach of \cite{zuo00:_gener} and
numerous further concepts including the simplicial depth due to
\cite{liu88} and zonoid depth defined by \cite{kos:mos97} and \cite{mos02}.
\cite{mol:tur21} showed that many such constructions arise from
an application of a sublinear expectation to the projections of random
vectors and interpreting the obtained function as the support function
of the depth-trimmed region.

Motivated by financial applications, it is natural to associate with a
vector a certain random closed convex set which describes all
financial positions attainable from the value of the vector by
admissible transactions. If all transactions are forbidden, this
random set is $X+\R_-^d$. The most classical nontrivial example arises
when the components of a vector $X$ represent different currencies,
and the set of admissible positions is obtained by adding to the
vector all points from a cone $\cone$ which describes positions
attainable from price zero, see \cite{kab:saf09}. In the financial
setting the cone $\cone$ always contains $\R_-^d$, meaning that
disposal or consumption of assets are always allowed.

A number of works has addressed assessing the risk of $X+\cone$, which
is the cone translated by $X$, see \cite{ham:hey10} and
\cite{mol:cas16}. The key issue is to associate with $X+\cone$ a
closed convex set in $\R^d$ which describes its risk and regards
$X+\cone$ as acceptable if this set contains the origin. This set can
be described as the family of all $a\in\R^d$ such that $a+X+\cone$ is
acceptable.

Abstracting away from the financial interpretation, it is possible to
associate with each point $x$ from $\R^d$ its position in relation to
$X+\cone$. If $\cone=\{0\}$ is degenerate, then this is similar to
considering the depth of $x$ in relation to the distribution of the
random vector $X$. In case of a nontrivial cone $\cone$, its
orientation can be rather arbitrary and can address other features, not
necessarily inherent to the financial setting. This line of research
was initiated by \cite{ham:kos18} who defined the \emph{cone
  distribution} function $F_{X,\cone}(z)$, $z\in\R^d$, as the infimum of the
probability content of closed half-spaces which pass through $z$ and
have outer normals from the polar cone to $\cone$. The excursion sets
of the cone distribution function are termed the
\emph{$\cone$-quantile function} of $X$. There is a one-to-one
correspondence between such set-valued quantile functions and the cone
distribution function.

A similar programme has been carried over by \cite{linh:hamel23} for
\emph{expectiles} replacing quantiles. The sublinearity
property of expectiles established by \cite{BelliniKlarETAL2014} makes
it possible to naturally preserve the convexity property instead of
taking the convex hull involved in the constructions based on quantiles.
% For instance, if $\cone=(-\infty,0]^d$, then its dual is
% $[0,\infty)^d$ and the cone distribution function is
% $\inf_{w\in [0,\infty^d}} \Prob{\langle X,w\rangle\leq \langle
% z,w\rangle}$.

In this paper we extend the mentioned works of Andreas Hamel and his
coauthors in three directions.  First, instead of randomly translated
cones, we allow for \emph{random closed convex sets} $\bX$ in $\R^d$,
not necessarily being a random translation of a deterministic cone.
Second, instead of considering a set-valued version of quantiles or
expectiles, we construct a set-valued version of a general \emph{gauge
  function} $\gae$. A general gauge function is a law-determined
real-valued mapping from the space of random variables, which is
constant preserving, positively homogeneous, monotone and translation
equivariant. Besides the prime examples of quantiles and expectiles,
our definition of the gauge also comprises the average-value-at-risk
or expected shortfall, an important risk measure in quantitative risk
management, or the gauges based on moments as defined by \cite{fis03}.
Third, we construct a \emph{conditional version} of the set-valued
gauge of the random set $\bX$. That means, for a sub-$\sigma$-algebra
$\ssalg$ of the $\sigma$-algebra of the underlying probability space,
a gauge function $\gae$, and a random closed convex set $\bX$, we
construct another random $\ssalg$-measurable closed convex set
$\bY=\Gae(\bX|\ssalg)$.  In the unconditional case and if the random
set is a deterministic cone translated by a random vector, our
set-valued quantiles and expectiles yield the objects defined by
\cite{ham:kos18} and \cite{linh:hamel23}.

We first explain our construction without conditioning. 
Recall that a closed convex set $F\subseteq\R^d$ can be uniquely
represented as the intersection of all half-spaces containing $F$
defined via supporting hyperplanes of $F$ with any normal vector
$w$ from the unit sphere $\Sphere$. That is,
\[
  F = \bigcap_{w\in \Sphere} H_w \big (h(F,w) \big),
\]
where 
\begin{displaymath}
  h(F,w)=\sup \big\{\langle w,x\rangle: x\in F \big\},\quad w\in\R^d,
\end{displaymath}
is the support function of $F$ (also called a scalarisation) and a
generic half-space with normal vector $w\in\R^d$ is defined by
\begin{equation}
  \label{eq:Hwt}
  H_w(t)=\big\{x\in\R^d:\langle x,w\rangle\leq t\big\}, \qquad t\in\R.
\end{equation}
Note that $H_w(\infty)=H_0(t)=\R^d$ for any $w\in\R^d$ and $t\in\R$.

This approach is also applicable if a deterministic $F$ is replaced by
a compact convex random set $\bX$. In this case, the support function
$h(\bX,w)$ is a path continuous random function of $w$ from 
the unit sphere.  The values $h(\bX,w)$, $w\in\Sphere$, make it
possible to describe $\bX$ in terms of a collection of random
variables, sometimes called scalarisations of $\bX$.  We can thus
apply the gauge function $\gae$ (e.g., a quantile)
to each of these scalarisations,
considering $\gae(h(\bX,w))$, $w\in\Sphere$.  Hence, we end up with
the \emph{unconditional} set-valued gauge, which is the closed random
set
\begin{equation}
  \label{eq:gauge1}
  \Gae(\bX) = \bigcap_{w\in \Sphere} H_w \big (\gae(h(\bX,w)) \big).
\end{equation}

In order to build the corresponding conditional version, we first need
to define the conditional scalar gauge function with respect to a
sub-$\sigma$-algebra $\ssalg$.  Since the gauge itself is
law-determined, this can be achieved by applying the gauge to the
conditional distribution.  Hence, we replace the gauge of the
scalarisation, $\gae(h(\bX,w))$, $w\in\Sphere$, in \eqref{eq:gauge1}
by the conditional gauge of the scalarisation.  This leads us to the
random closed convex set
\[
  \bigcap_{w\in \Sphere} H_w \big (\gae(h(\bX,w)|\ssalg) \big),
\]
While we deal with uncountable intersections, it is known that
the largest $\ssalg$-measurable random closed convex subset of this
intersection is well defined \citep{lep:mol19}.
%\textit{Prima facie,} this is what we wanted to achieve.

This construction relies on letting the direction $w$ be
deterministic. This may lead to problems when $\bX$ is unbounded and
so the support function in some (possibly, random) directions becomes
infinite.  For instance, if $\bX$ is a half-space with the normal
having a non-atomic distribution, then $h(\bX,w)=\infty$ almost surely
for each $w\in\Sphere$, so that $\bX$ cannot be recovered by the
values of the support function on any countable subset of $\Sphere$.
We can address this problem by replacing the deterministic
$w\in\Sphere$ by all $\ssalg$-measurable random vectors. With this
idea, our set-valued \emph{conditional} gauge functions is the largest
$\ssalg$-measurable random closed convex set contained in the
intersection of $H_W(\gae(h(\bX,W)|\ssalg))$ over all
$\ssalg$-measurable $W$ with values in $\Sphere$.

As the outcome of our construction, a random closed convex set is associated
with a (possibly, random in the conditional setting) set which serves
as a kind of a depth-trimmed region for the random set. This
construction is principally different from the one developed by
\cite{cascos21:_depth}, where a depth-trimmed region for a random set
is a collection of sets. 

Following the works by 
\cite{ham:hey10}, \cite{ham:hey:rud11}, \cite{ham:rud:yan13}, and \cite{mol:cas16},
in the special cases of subadditive or superadditive gauge functions
and in the unconditional setting, \cite{mol:mueh21}
systematically studied subadditive and superadditive
functionals of random closed convex sets. The
conditional setting calls for new tools which are developed in the
current paper. Working out
these conditional versions makes it possible to calculate conditional
risks given some information about the financial position or about the
market.  The importance of such a conditional approach in risk
measurement has been emphasised in \cite[Chapter~9.2]{MFE15}.

The paper is organised as follows. Section~\ref{sec:cond-gauge-funct}
introduces gauge functions, their conditional variants and provides
several examples of important gauge
functions. Section~\ref{sec:random-convex-sets} provides a necessary
background material on random closed convex sets and several concepts
related to them. A representation theorem using the intersection of a
countable number of random half-spaces is also
proved. Section~\ref{sec:cond-set-valu} describes the construction of
a set-valued conditional gauge. Particular cases resulting from
choosing some of the most important gauge functions are discussed in
Section~\ref{sec:special-cases-gauges}. In particular, choosing the
essential infimum or essential supremum as the gauge leads to concepts
close to the ideas of the conditional core and conditional convex hull
developed by \cite{lep:mol19}. Section~\ref{sec:random-singletons}
discusses conditional gauges for random singletons and thus provides
conditional versions of some well known depth-trimmed
regions. For instance, we obtain conditional quantiles, 
mentioned by \cite{hallin10}, \cite{MR4297710} and \cite{MR4297710}.
Section~\ref{sec:cones-transl-rand} concerns 
conditional gauges for (possibly, translated) random cones.

\section{Conditional gauge functions}
\label{sec:cond-gauge-funct}

\subsection{Gauge functions}
\label{sec:gauge-functions}

Let $(\Omega,\salg,\P)$ be a complete atomless probability space, and
let us denote by $\Lp(\R^d)$ the set of all $p$-integrable random
vectors in $\R^d$ (actually, their a.s.\ equivalence classes), where
$p\in[1,\infty]$.  Endow $\Lp(\R^d)$, $p\in[1,\infty]$, with the
$\sigma(\Lp,\Lp[q])$-topology based on the pairing of $\Lp(\R^d)$ and
$\Lp[q](\R^d)$ with $p^{-1}+q^{-1}=1$.  Further, let $\Lp[0](\R^d)$ be
the family of all random vectors with the topology induced by the
convergence in probability, equivalently, $\E\min(|X_n-X|,1)\to0$.
These families become families of random variables if $d=1$.  For a
sub-$\sigma$-algebra $\ssalg$ of $\salg$ we write $\Lp(\R^d;\ssalg)$
to indicate the set of all $\ssalg$-measurable random vectors in
$\Lp(\R^d)$ where $p\in\{0\}\cup[1,\infty]$.  Further, denote by
$\Lpb(\R^d)$ the family of random vectors $\xi$ which are
representable as the sum $\xi=\xi'+\xi''$, where $\xi'\in\Lp(\R^d)$
for $p\in\{0\}\cup[1,\infty]$ and $\xi''$ belongs to the family
$\Lp[0]([0,\infty]^d)$ of random vectors whose components take values
from the extended positive half-line.  Similarly, for a
sub-$\sigma$-algebra $\ssalg$ of $\salg$, the family $\Lpb(\R^d; \ssalg)$
consists of all $\ssalg$-measurable elements of $\Lpb(\R^d)$.
% such that $\xi'$ and $\xi''$ are $\ssalg$-measurable.

We introduce the following concept of a \emph{gauge
  function}. Denote by $\Rb = [-\infty,\infty]$ the extended real
line. 

% Depending on the choice of the gauge,  its
% range of values is either $(-\infty,\infty]$ or $[-\infty,\infty)$. In view
% of this, we use the symbol $\Rb$ to denote any of these two
% half-extended lines. The particular variant will always be clear from
% the choice of the gauge function.

\begin{definition}
  \label{def:subl}
  A \textit{gauge function} is a map $\gae:\Lpb(\R)\to\Rb$, satisfying
  the following properties for all $X,Y\in \Lpb(\R)$:
  \begin{enumerate}[(g1)]
  \item law-determined: $\gae(X)=\gae(Y)$ whenever $X$
    and $Y$ share the same distribution;
  \item constant preserving: $\gae(c)=c$ for all
    $c\in\R$;
  \item positive homogeneity: $\gae(cX)=c\,\gae(X)$ for all $c\in(0,\infty)$;
  \item monotonicity: $\gae(X)\leq\gae(Y)$ if $X\leq Y$ almost
      surely;
  \item translation equivariance: $\gae(X+c)=\gae(X)+c$ for all $c\in\R $.    
    \\[2ex]
    In some cases, we also impose the following optional properties:
  \item Lipschitz property (if $p\in[1,\infty]$):
    $|\gae(X)-\gae(Y)|\leq \|X-Y\|_p$;
  \item subadditivity: $\gae(X+Y)\leq \gae(X)+\gae(Y)$ if the
      right-hand side is well defined;
  \item superadditivity: $\gae(X+Y)\geq \gae(X)+\gae(Y)$ if the
      right-hand side is well defined;
  \item sensitivity with respect to infinity:
    % for $\Rb = (-\infty,\infty]$:
    $\P\{X=\infty\}>0$ implies that $\gae(X)=\infty$.
    % sensitivity with respect to infinity for $\Rb = [-\infty,\infty)$:
    % $\P\{X=-\infty\}>0$ implies that $\gae(X)=-\infty$.
  \end{enumerate}
\end{definition}

Note that in (g7) and (g8) the right-hand sides are well defined
unless the expressions $(\infty-\infty)$ or $(-\infty+\infty)$ appear.

\begin{remark}
  It suffices to replace (g2) by the requirement that $\gae(0)$ is
  finite. Indeed, positive homogeneity (g3) implies that
  $\gae(0)=\gae(c\cdot 0) = c\,\gae(0)$ for all
  $c\in(0,\infty)$. Therefore, $\gae(0)\in\{0,\pm\infty\}$.
  Assuming that $\gae(0)$ is finite implies that $\gae(0)=0$, and then
  the translation equivariance yields that $\gae(c)=c$ for all
  $c\in\R$. On the other hand, the translation equivariance implies
  for all $c\in\R$ that
  $\gae(\infty) = \gae(\infty + c) = \gae(\infty)+c$. Hence,
  $\gae(\infty) = \infty$.  Thus, (g3) and (g5) in combination with
  the finiteness of $\gae(0)$ readily imply (g2).
\end{remark}

Since the gauge is law-determined, it is actually a functional on
distributions of random variables. In view of this, the monotonicity
property (g4) holds if $X$ is stochastically smaller than $Y$, meaning
that the cumulative distribution functions satisfy $F_X(t)\ge F_Y(t)$
for all $t\in\R$.  Recall that $X\le Y$ almost surely implies the first
order stochastic dominance.

To ensure better analytical properties, it may be useful to require
that gauge functions are lower semicontinuous in $\sigma(\Lp,\Lp[q])$, that is,
\begin{equation*}
  \gae(X)\leq \liminf_{n\to\infty} \gae(X_n)
\end{equation*}
for each sequence $\{X_n,n\geq1\}\subset \Lpb(\R)$ converging to
$X\in \Lpb(\R)$ in the $\sigma(\Lp,\Lp[q])$-topology if
$p\in[1,\infty]$, and converging in probability if $p=0$. While such
properties are very natural for subadditive gauges, checking and
interpreting them becomes more complicated if subadditivity is not
imposed.

It is easy to see that the translation equivariance property implies
the Lipschitz property of gauge functions defined on
$\Lp[\infty](\R)$.  If the gauge is subadditive, it will be called a
\emph{sublinear expectation} and denoted by $\ve$ (possibly, with some
sub- and superscripts). A monotone, translation equivariant, and
superadditive gauge is termed a \emph{utility function} or a
\emph{superlinear} gauge and is denoted by $\ue$.

If $\gae$ is a gauge function, its dual is defined by
\begin{equation}
  \label{eq:1}
  \overline{\gae}(X)=-\gae(-X), \quad X\in\Lp(\R).
\end{equation} 
Passing to the dual relates sublinear and superlinear gauges.
%and also gauges with values in $(-\infty,\infty]$ to those with values
%in $[-\infty,\infty)$.

A superadditive gauge satisfies all properties of a coherent utility
functions, see \cite{Delbaen2012}, where the input $X$ stands for a
gain.  Its subadditive dual is a coherent risk measure in the sign
convention of \cite{MFE15}, where the input $X$ stands for a loss.

\subsection{Concatenation approach}
\label{sec:conc-appr}

% Assume that the gauge attains values in $(-\infty,\infty]=\Rb$, noticing
% that a similar construction applies to the case when the values are
% in $[-\infty,\infty)$.  
For $p\in\{0\}\cup[1,\infty]$, denote by
$\Mp$ the family of Borel probability measures on $\Rb$,
which correspond to distributions of random variables in $\Lpb(\R)$.
The fact that the gauge function is law-determined means that $\gae$
induces a functional $\Tg\colon \Mp\to \Rb$. We equip $\Mp$ with the
smallest $\sigma$-algebra $\sB(\Mp)$ such that for any Borel set
$B\subseteq \Rb$ the evaluation map
$I_B\colon \Mp\to [0,1]$ given by $I_B(\mu) = \mu(B)$ is
Borel measurable.

Let $\ssalg\subseteq \salg$ be a sub-$\sigma$-algebra of $\salg$.  Our
aim is to define a \emph{conditional} gauge function with values in
$\Lp[0](\bar \R;\ssalg)$, which inherits the corresponding properties of
$\gae$ and which coincides with $\gae$ if $\ssalg$ is trivial.  Denote
by $\P_{X|\ssalg}$ a regular version of the conditional distribution
of $X\in\Lpb(\R)$ given $\ssalg$.  Since for $p\in[1,\infty]$ and
$X\in\Lpb(\R)$, the negative part of $X$ is $p$-integrable, we have
that $\P_{X|\ssalg}\in \Mp$ almost surely.  For $p=0$, it trivially
holds that $\P_{X|\ssalg}\in \Mp[0]$ almost surely.

The conditional gauge is defined following the construction described
by \cite{FisslerHolzmann2022} via the concatenation
\begin{math}
  \gae(X|\ssalg) := \Tg(\P_{X|\ssalg}).
\end{math}
To make this approach mathematically meaningful, we need to assume
that $\Tg\colon \Mp\to \Rb$ is $\sB(\Mp)/\sB(\Rb)$-measurable.  This
assumption holds for all practically relevant examples discussed in
Section~\ref{subsec:examples-gauge} due to the results of
\cite{FisslerHolzmann2022}.  This measurability assumption directly
yields the following result, establishing $\ssalg$-measurability
of $\gae(X|\ssalg)$. It follows from Lemma~2 of
\cite{FisslerHolzmann2022}.

\begin{lemma}
  \label{lemma:measurability}
  If $\Tg\colon \Mp\to \Rb$ is $\sB(\Mp)/\sB(\Rb)$-measurable, then
  $\Tg(\P_{X|\ssalg})$ is $\ssalg/\sB(\Rb)$-measurable.
\end{lemma}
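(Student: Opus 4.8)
The plan is to factor the map $\omega\mapsto\Tg(\P_{X|\ssalg}(\omega))$ as a composition and to verify measurability factor by factor. Regard $\kappa\colon\omega\mapsto\P_{X|\ssalg}(\omega)$ as a map from $\Omega$ into $\Mp$ (this is legitimate because $\P_{X|\ssalg}(\omega)\in\Mp$ for $\P$-almost every $\omega$, as recalled just before the lemma, and we may set $\kappa$ to a fixed element of $\Mp$ on the remaining null set, which is itself $\ssalg$-measurable as a $\kappa$-preimage). Then $\omega\mapsto\Tg(\P_{X|\ssalg}(\omega))$ is $\Tg\circ\kappa$, and since $\Tg$ is $\sB(\Mp)/\sB(\Rb)$-measurable by hypothesis and a composition of measurable maps is measurable, it suffices to show that $\kappa$ is $\ssalg/\sB(\Mp)$-measurable.

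For this I would invoke the universal property of $\sB(\Mp)$: by construction it is the smallest $\sigma$-algebra on $\Mp$ making every evaluation map $I_B\colon\mu\mapsto\mu(B)$, $B\in\sB(\Rb)$, Borel measurable. Hence $\kappa$ is $\ssalg/\sB(\Mp)$-measurable if and only if $I_B\circ\kappa$ is $\ssalg$-measurable for every Borel set $B\subseteq\Rb$. But $I_B\circ\kappa$ is precisely the map $\omega\mapsto\P_{X|\ssalg}(\omega)(B)$, which by the defining property of a regular version of the conditional distribution is a version of $\P\{X\in B\mid\ssalg\}$ and is therefore $\ssalg$-measurable. This yields the measurability of $\kappa$, and composing with $\Tg$ gives the claim.

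I do not expect a genuine obstacle here: the whole argument is an unwinding of the two measurability definitions together with the universal property of the initial $\sigma$-algebra $\sB(\Mp)$. The one point deserving a moment's care is that the generating family used in the universal-property step must be exactly the one used to define $\sB(\Mp)$ --- namely all evaluations $I_B$ with $B$ ranging over the Borel subsets of $\Rb$ --- so that no $\pi$-system or monotone-class reduction is needed; this matches the definition adopted above. As the statement indicates, the result also follows from Lemma~2 of \cite{FisslerHolzmann2022}.
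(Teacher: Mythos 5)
Your argument is correct and is exactly the standard composition argument (initial $\sigma$-algebra generated by the evaluations $I_B$, plus the defining $\ssalg$-measurability of $\omega\mapsto\P_{X|\ssalg}(\omega)(B)$) that underlies Lemma~2 of Fissler and Holzmann (2022); the paper itself supplies no proof beyond that citation, so you have simply written out the intended argument. One small wording point: the exceptional null set where $\P_{X|\ssalg}(\omega)\notin\Mp$ is $\ssalg$-measurable not ``as a $\kappa$-preimage'' (which is circular before $\kappa$ is known to be measurable) but because it can be written as $\{\omega:\int |x|^p\one_{\{x<0\}}\,\P_{X|\ssalg}(\omega)(dx)=\infty\}$, whose $\ssalg$-measurability follows from the evaluation maps by a monotone-class argument.
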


The construction of the conditional gauge can be summarised in the
following proposition.

\begin{proposition}\label{prop:conditional-g}
  Let $\gae:\Lpb(\R)\to\Rb$, $p\in\{0\}\cup[1,\infty]$, be a gauge
  function such that $\Tg\colon \Mp\to \Rb$ is
  $\sB(\Mp)/\sB(\Rb)$-measurable. Then, for any sub-$\sigma$-algebra
  $\ssalg\subseteq \salg$, the \emph{conditional gauge function}
  \begin{displaymath}
    \gae(\cdot|\ssalg) : \Lpb(\R)\to \Lp[0](\bar \R;\ssalg),
    \quad X\mapsto \gae(X|\ssalg) := \Tg(\P_{X|\ssalg})
  \end{displaymath}
  is such that for each $X\in \Lpb(\R)$, the conditional gauge
  $\gae(X|\ssalg)$ exists, and it is an
  $\ssalg$-measurable random variable with values in $\Rb$.
\end{proposition}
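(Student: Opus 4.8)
The plan is to reduce the statement to Lemma~\ref{lemma:measurability} by checking that the concatenation $X\mapsto\Tg(\P_{X|\ssalg})$ is genuinely well defined. Concretely, three things need verifying: (i) a regular version $\P_{X|\ssalg}$ of the conditional distribution of $X$ given $\ssalg$ exists; (ii) for $\P$-almost every $\omega$ the probability measure $\P_{X|\ssalg}(\omega,\cdot)$ lies in the domain $\Mp$ of the functional $\Tg$, so that $\Tg(\P_{X|\ssalg}(\omega,\cdot))$ is meaningful; and (iii) the map $\omega\mapsto\P_{X|\ssalg}(\omega,\cdot)$ is $\ssalg/\sB(\Mp)$-measurable, whence composition with the (assumed) $\sB(\Mp)/\sB(\Rb)$-measurable map $\Tg$ yields an $\ssalg$-measurable, $\Rb$-valued random variable that is moreover independent, up to a $\P$-null set, of the chosen regular version.

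For (i) I would invoke the standard disintegration theorem: since $\Rb=[-\infty,\infty]$ is a compact metric, hence Polish, space and $(\Omega,\salg,\P)$ is complete, the $\Rb$-valued random variable $X\in\Lpb(\R)$ admits a regular conditional distribution $\P_{X|\ssalg}$ given $\ssalg$, unique up to $\P$-indistinguishability. For (ii) I would use the one-sided integrability built into $\Lpb(\R)$, as noted right before the proposition. When $p\in[1,\infty)$ the negative part $X^{-}:=\max(-X,0)$ is $p$-integrable, so $\E[(X^{-})^{p}\mid\ssalg]<\infty$ almost surely; since $\int_{[-\infty,0)}|t|^{p}\,\P_{X|\ssalg}(\omega,dt)=\E[(X^{-})^{p}\mid\ssalg](\omega)$ and $\P_{X|\ssalg}(\omega,\{-\infty\})=\P(X=-\infty\mid\ssalg)(\omega)=0$ for a.e.\ $\omega$ (because $X>-\infty$ a.s.), the measure $\P_{X|\ssalg}(\omega,\cdot)$ is, for a.e.\ $\omega$, the law of some element of $\Lpb(\R)$, i.e.\ it belongs to $\Mp$; the case $p=\infty$ is analogous with "$p$-integrable below'' replaced by "essentially bounded below'', and the case $p=0$ is immediate. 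On the exceptional null set one fixes any element of $\Mp$, which does not affect the resulting a.s.\ equivalence class.

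For (iii) I would recall that $\sB(\Mp)$ is, by definition, generated by the evaluation maps $I_{B}(\mu)=\mu(B)$, $B\in\sB(\Rb)$, and that the defining property of the regular conditional distribution is precisely that $\omega\mapsto\P_{X|\ssalg}(\omega,B)=\P(X\in B\mid\ssalg)(\omega)$ is $\ssalg$-measurable for every Borel $B$; hence $\omega\mapsto\P_{X|\ssalg}(\omega,\cdot)$ is $\ssalg/\sB(\Mp)$-measurable, and composing with $\Tg$ gives that $\gae(X|\ssalg)=\Tg(\P_{X|\ssalg})$ is $\ssalg/\sB(\Rb)$-measurable. This last composition step is exactly Lemma~\ref{lemma:measurability} (Lemma~2 of \cite{FisslerHolzmann2022}), so after the domain check the proof is essentially an application of it. I do not expect a real obstacle here; the only point that is not purely routine is (ii)—one must confirm that the relevant one-sided integrability (respectively boundedness below, or, for $p=0$, nothing) passes to the conditional distribution, which is where the structure of $\Lpb(\R)$ enters—and this is a short computation rather than a difficulty.
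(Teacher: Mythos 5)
Your proposal is correct and follows essentially the same route as the paper, which treats the proposition as a summary of the preceding concatenation construction: existence of the regular conditional distribution, the observation that the $p$-integrability of the negative part of $X\in\Lpb(\R)$ forces $\P_{X|\ssalg}\in\Mp$ almost surely, and the measurability of the composition via Lemma~\ref{lemma:measurability}. Your step (ii) simply spells out in more detail the one-line justification the paper gives for $\P_{X|\ssalg}\in\Mp$.
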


%\begin{proof}
%  Let $X\in \Lpb(\R)$.  Recall that a regular version of the
%  conditional distribution $\P_{X|\ssalg}$ is a Markov kernel
%  $\Omega\times \sB(\R)\to[0,1]$ satisfying
%  \[
%    \P(A \cap \{X\in B\}) = \int_{A} \P_{X|\ssalg}(\omega;B) \mathrm{d}\P(\omega)
%  \]
%  for all $A\in \ssalg$ and for all $B\in\sB(\R\cup\{\infty\})$. As shown in
%  \cite{FisslerHolzmann2022}, the map $\Omega\to \Mp(\R)$,
%  $\omega\mapsto \P_{X|\ssalg}(\omega;\cdot)$ is
%  $\ssalg/\sB(\Mp(\R))$-measurable.  The rest follows due
%  to the measurability of $\Tg$.
%\end{proof}

%In \cite{FisslerHolzmann2022}, the measurability of the mean and the lower
%and upper $\alpha$-quantile are directly established, with the
%essential supremum and essential infimum as special cases. Moreover,
%it is shown that any elicitable functional (that is, any functional
%which can be written as the minimiser of an expected loss function) is
%measurable.

The properties of Definition~\ref{def:subl} and the construction via
concatenation from Proposition~\ref{prop:conditional-g} directly imply
the following properties.

\begin{proposition}\label{prop:properties}
Under the assumptions of Proposition~\ref{prop:conditional-g},
  the conditional gauge function satisfies the following properties
  for all $X,Y\in \Lpb(\R)$:
  \begin{enumerate}[(i)]
  \item constant preserving: $\gae(X|\ssalg)=X$ a.s.\ if $X$ is $\ssalg$-measurable;
  \item positive homogeneity:
    $\gae(\gamma X|\ssalg)=\gamma\gae(X|\ssalg)$ a.s.\ for all 
    $\ssalg$-measurable $\gamma$ with $\gamma\in\Lp[\infty](\R_+)$ if
    $p\in[1,\infty]$ and $\gamma\in\Lp[0](\R_+)$ if $p=0$;
  \item law-determined: $\gae(X|\ssalg)=\gae(Y|\ssalg)$ a.s.\
    if $\P_{X|\ssalg} = \P_{Y|\ssalg}$ almost
    surely;
  \item monotonicity: $\gae(X|\ssalg)\leq\gae(Y|\ssalg)$ a.s.\ if
    $X\le Y$ in the first order stochastic dominance conditionally on
    $\ssalg$ (that is, almost surely
    $F_{X|\ssalg}(t)\ge F_{Y|\ssalg}(t)$ for all $t\in\R$ for the conditional c.d.f.s of $X$ and $Y$ given $\ssalg$), e.g., if
    $X\leq Y$ a.s.;
  \item conditional translation equivariance:
    \begin{displaymath}
      \gae(X+Y|\ssalg)=\gae(X|\ssalg)+Y\quad a.s.
    \end{displaymath}
    for all $\ssalg$-measurable $Y$ if $\gae(X|\ssalg)>-\infty$.
  \end{enumerate}
\end{proposition}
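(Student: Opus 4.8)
\noindent\emph{Proof plan.}
The plan is to reduce each of (i)--(v) to the corresponding unconditional axiom (g1)--(g5), now read as a statement about the induced functional $\Tg\colon\Mp\to\Rb$ and applied $\omega$-by-$\omega$ to the regular conditional distribution $\P_{X|\ssalg}$. Throughout, the objects $\gae(\cdot|\ssalg)$ are well defined, $\ssalg$-measurable and $\Rb$-valued by Proposition~\ref{prop:conditional-g} and Lemma~\ref{lemma:measurability}, so the only work is (a)~to identify the relevant conditional law explicitly, and (b)~to quote the pointwise property of $\Tg$. The five items differ only in how delicate step~(a) is.

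Items (iii) and (i) are essentially immediate. For (iii), if $\P_{X|\ssalg}=\P_{Y|\ssalg}$ a.s.\ then $\Tg(\P_{X|\ssalg})=\Tg(\P_{Y|\ssalg})$ a.s.\ by construction. For (i), an $\ssalg$-measurable $X$ satisfies $\P_{X|\ssalg}=\delta_X$ a.s.; since $\Tg(\delta_c)=\gae(c)=c$ for $c\in\R$ by (g2) and $\Tg(\delta_\infty)=\gae(\infty)=\infty$ by the remark after Definition~\ref{def:subl}, we get $\gae(X|\ssalg)=X$ a.s. For (iv), the hypothesis says that off a $\P$-null set the measure $\P_{X|\ssalg}(\omega)$ is stochastically dominated by $\P_{Y|\ssalg}(\omega)$, so it suffices to know that $\Tg$ is monotone with respect to stochastic dominance on $\Mp$; this is the observation recorded after Definition~\ref{def:subl}: given $\mu,\nu\in\Mp$ with $F_\mu\ge F_\nu$ pointwise, the atomlessness of $(\Omega,\salg,\P)$ provides a monotone (quantile) coupling $X'\le Y'$ a.s.\ with $X'\sim\mu$ and $Y'\sim\nu$, whence $\Tg(\mu)=\gae(X')\le\gae(Y')=\Tg(\nu)$ by (g1) and (g4). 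Applying this for a.e.\ $\omega$, and reducing ``for all $t$'' to rational $t$ by right-continuity of the conditional c.d.f.s, yields (iv).

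The real content is in (ii) and (v), and the main obstacle is a single auxiliary fact: for an $\ssalg$-measurable coefficient $\gamma\ge 0$, respectively an $\ssalg$-measurable shift $Y$, the regular conditional law of $\gamma X$, respectively of $X+Y$, given $\ssalg$ equals, almost surely, the image of $\P_{X|\ssalg}(\omega)$ under the map $g_{\gamma(\omega)}\colon x\mapsto\gamma(\omega)x$, respectively $g_{Y(\omega)}\colon x\mapsto x+Y(\omega)$. I would prove this by the functional form, i.e.\ by checking $\E[f(\gamma X)\mid\ssalg](\omega)=\int f(\gamma(\omega)x)\,\P_{X|\ssalg}(\omega,dx)$ for bounded continuous $f$: first for a simple $\gamma=\sum_i c_i\one_{A_i}$ with $A_i\in\ssalg$, where on each $A_i$ this is just the ordinary transformation rule for conditional distributions restricted to an $\ssalg$-set; then for general $\gamma\ge 0$ by monotone approximation $\gamma_n\uparrow\gamma$, using bounded convergence under the conditional expectation on the left and dominated convergence for the inner integral on the right (the shift case is analogous and even simpler, as $x\mapsto x+c$ is a homeomorphism). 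Accompanying this are two measurability checks, namely that the image-measure maps $\Mp\times\R\to\Mp$, $(\mu,c)\mapsto\mu\circ g_c^{-1}$ for $g_c(x)=cx$ and for $g_c(x)=x+c$, are jointly Borel measurable, so that $\omega\mapsto\P_{\gamma X|\ssalg}(\omega)$ is a genuine $\ssalg$-measurable version; both reduce, via a monotone class argument on Borel sets $B$, to the measurability of $\mu\mapsto\mu(B)$, which is how $\sB(\Mp)$ was defined.

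Granting the auxiliary fact, (ii) and (v) follow by composing with $\Tg$ and quoting (g3) and (g5) in the form $\Tg(\mu\circ g_c^{-1})=c\,\Tg(\mu)$ for $c>0$ with $g_c(x)=cx$, and $\Tg(\mu\circ g_c^{-1})=\Tg(\mu)+c$ for $c\in\R$ with $g_c(x)=x+c$; one applies these with $c=\gamma(\omega)$, respectively $c=Y(\omega)$, for a.e.\ $\omega$. The degenerate case $\gamma(\omega)=0$ is treated directly ($\P_{\gamma X|\ssalg}(\omega)=\delta_0$, so $\gae(\gamma X|\ssalg)(\omega)=\Tg(\delta_0)=0=0\cdot\gae(X|\ssalg)(\omega)$ under the convention $0\cdot(\pm\infty)=0$), and the caveat $\gae(X|\ssalg)>-\infty$ in (v) precisely excludes the indeterminate form $(-\infty)+(+\infty)$ when extended-real values of $Y$ occur; the handling of the exceptional null sets is routine.
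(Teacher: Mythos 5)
Your proof is correct and follows exactly the route the paper intends: the paper offers no written proof, merely asserting that the properties ``directly'' follow from Definition~\ref{def:subl} and the concatenation $\gae(X|\ssalg)=\Tg(\P_{X|\ssalg})$, which is precisely your $\omega$-wise reduction to (g1)--(g5). The details you supply --- the quantile coupling for monotonicity of $\Tg$ under stochastic dominance, the transformation rule $\P_{\gamma X|\ssalg}=\P_{X|\ssalg}\circ g_{\gamma}^{-1}$ for $\ssalg$-measurable $\gamma$ (cf.\ the conditional c.d.f.\ computation in Remark~\ref{remark:extension}), and the edge cases $\gamma=0$ and $Y=\infty$ --- are the standard ones the authors leave implicit.
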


\begin{remark}\label{remark:extension}
  It is possible to extent the domain of the conditional gauge from
  $\Lpb(\R)$ to all $X\in \Lpb[0](\R)$ such that
  $\P_{X|\ssalg}\in \Mp$ almost surely.  The simplest situation arises
  when $X$ itself is $\ssalg$-measurable so that the conditional
  distribution $\P_{X|\ssalg}$ becomes degenerate and thus an element
  of $\Mp$ for $p\in\{0\}\cup[1,\infty]$.  Hence, property (i) in
  Proposition~\ref{prop:properties} can be extended to
  $\gae(X|\ssalg)=X$ a.s.\ for all $X\in\Lpb[0](\R;\ssalg)$.
  Similarly, the positive homogeneity in (ii) holds for all
  $\gamma\in \Lp[0]([0,\infty);\ssalg)$. Indeed, the conditional
  c.d.f. of $\gamma X$ given $\ssalg$  is
  \begin{displaymath}
    F_{\gamma X|\ssalg}(t;\omega) =
    \begin{cases}
      F_{X|\ssalg}(t/\gamma(\omega); \omega), & \text{if } \gamma(\omega)>0,\\
      1_{[0,\infty)}(t), & \text{if } \gamma(\omega)=0,
    \end{cases}
  \end{displaymath}
  for $\P$-almost all $\omega\in\Omega$.  Moreover, the translation
  equivariance in (v) can be extended to any $\ssalg$-measurable $Y$.
  This argument appears also later in the construction of the
  generalised conditional expectation.
\end{remark}

Further properties like sub- and superadditivity easily translate to
conditional gauges.  For the sensitivity with respect to infinity, (g9), we
obtain the following result.

\begin{lemma}
  Suppose the gauge function satisfies (g9) and the assumptions of
  Proposition~\ref{prop:conditional-g} are satisfied. Then, for
  $X\in \Lpb(\R)$, on the event $\big\{\P\{X=\infty|\ssalg\}>0\big\}$,
  the conditional gauge $\gae(X|\ssalg)$ is $\infty$ almost surely.
\end{lemma}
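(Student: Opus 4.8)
The plan is to reduce the claim to a pointwise application of property~(g9), reformulated as a statement about the functional $\Tg$ on $\Mp$. I would start by fixing a regular version of the conditional distribution $\P_{X|\ssalg}$, and recall from the construction of $\gae(\cdot\,|\ssalg)$ in Proposition~\ref{prop:conditional-g} that, for $X\in\Lpb(\R)$, the negative part of $X$ is $p$-integrable when $p\in[1,\infty]$ (for $p=0$ there is no integrability restriction at all), so that $\P_{X|\ssalg}(\cdot\,;\omega)\in\Mp$ for $\P$-almost all $\omega$, and on that full-measure set $\gae(X|\ssalg)(\omega)=\Tg\big(\P_{X|\ssalg}(\cdot\,;\omega)\big)$. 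Since $\omega\mapsto\P_{X|\ssalg}(B;\omega)$ is $\ssalg$-measurable for every Borel $B\subseteq\Rb$, the event $A:=\big\{\P\{X=\infty|\ssalg\}>0\big\}$ coincides, up to a $\P$-null set, with $\{\omega:\P_{X|\ssalg}(\{\infty\};\omega)>0\}$ and is in particular $\ssalg$-measurable.

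The one genuine step is the translation of~(g9) into the following statement: \emph{for every $\mu\in\Mp$ with $\mu(\{\infty\})>0$ one has $\Tg(\mu)=\infty$}. By the definition of $\Mp$ there is some $Z\in\Lpb(\R)$ with distribution $\mu$; then $\P\{Z=\infty\}=\mu(\{\infty\})>0$, so~(g9) gives $\gae(Z)=\infty$, and law-determinedness~(g1) of $\gae$ yields $\Tg(\mu)=\gae(Z)=\infty$.

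It then remains to combine the two observations. For $\P$-almost every $\omega\in A$, the measure $\mu_\omega:=\P_{X|\ssalg}(\cdot\,;\omega)$ lies in $\Mp$ and satisfies $\mu_\omega(\{\infty\})=\P\{X=\infty|\ssalg\}(\omega)>0$; hence $\Tg(\mu_\omega)=\infty$ by the previous step, and therefore $\gae(X|\ssalg)(\omega)=\Tg(\mu_\omega)=\infty$. This gives $\gae(X|\ssalg)=\infty$ almost surely on $A$, as claimed.

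I do not anticipate a real obstacle: the argument is essentially a ``disintegration'' of~(g9) along the conditioning $\sigma$-algebra. The only points requiring care are the passage from the random-variable formulation~(g9) to the functional formulation for $\Tg$ (which relies precisely on the fact that $\Mp$ is the set of laws of elements of $\Lpb(\R)$, together with~(g1)), and the version/measurability bookkeeping for $\P_{X|\ssalg}$, which is already covered by Lemma~\ref{lemma:measurability} and the standing measurability assumption on $\Tg$.
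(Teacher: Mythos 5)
Your proof is correct and is precisely the argument the paper leaves implicit: the lemma is stated without proof as a direct consequence of the concatenation construction $\gae(X|\ssalg)=\Tg(\P_{X|\ssalg})$, and your pointwise disintegration of (g9) via law-determinedness (using that every $\mu\in\Mp$ is by definition the law of some $Z\in\Lpb(\R)$) is exactly how that consequence is obtained. The measurability bookkeeping you flag is indeed covered by Lemma~\ref{lemma:measurability} and the standing assumption on $\Tg$.
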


We close this subsection by a direct observation, which follows from
the fact that $\P_{X|\ssalg} = \P_{X}$ almost surely if $X$ is
independent of $\ssalg$.

\begin{proposition}\label{prop:independence}
  Under the assumptions of Proposition~\ref{prop:conditional-g}, it
  holds that $\gae(X|\ssalg)=\gae(X)$ almost surely for all
  $X\in\Lpb(\R)$ which are independent of $\ssalg$.  In particular,
  $\gae(X|\ssalg)=\gae(X)$ almost surely for all $X\in\Lpb(\R)$ if
  $\ssalg$ is trivial.
\end{proposition}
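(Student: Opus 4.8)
The plan is to reduce the claim to the corresponding unconditional fact together with the basic property of regular conditional distributions, namely that $\P_{X|\ssalg} = \P_X$ $\P$-a.s.\ whenever $X$ is independent of $\ssalg$. First I would recall why the latter holds: for independent $X$ and $\ssalg$, the constant kernel $\omega\mapsto\P_X$ is a version of the conditional distribution, since for every Borel $B\subseteq\Rb$ and every $A\in\ssalg$ we have $\E[\one_A\,\one_B(X)] = \P(A)\,\P\{X\in B\} = \E[\one_A\,\P_X(B)]$, so $\P\{X\in B\mid\ssalg\} = \P_X(B)$ a.s.; since $\Rb$ is Polish, this pins down $\P_{X|\ssalg} = \P_X$ outside a single $\P$-null set (one null set works simultaneously for all $B$ by the usual argument through a countable generating field of Borel sets).

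Given this, the argument is immediate. Since $X\in\Lpb(\R)$ we know $\P_{X|\ssalg}\in\Mp$ a.s., and by the displayed identity $\P_{X|\ssalg} = \P_X$ a.s.; in particular $\P_X\in\Mp$. By the definition of the conditional gauge via concatenation, $\gae(X|\ssalg) = \Tg(\P_{X|\ssalg}) = \Tg(\P_X)$ a.s. On the other hand, because $\gae$ is law-determined, $\Tg(\P_X) = \gae(X)$ by the very definition of the induced functional $\Tg\colon\Mp\to\Rb$. Composing these two equalities gives $\gae(X|\ssalg) = \gae(X)$ a.s., which is the first assertion. For the "in particular" part, if $\ssalg = \{\emptyset,\Omega\}$ (up to $\P$-null sets) then every $X\in\Lpb(\R)$ is trivially independent of $\ssalg$, so the same conclusion applies to all $X$; alternatively one notes directly that for trivial $\ssalg$ the only version of $\P_{X|\ssalg}$ is the constant kernel $\P_X$.

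There is essentially no obstacle here; the one point requiring a modicum of care is the measure-theoretic passage from "$\P\{X\in B\mid\ssalg\} = \P_X(B)$ a.s.\ for each fixed $B$" to "$\P_{X|\ssalg} = \P_X$ a.s.\ as measures", i.e.\ interchanging the "for all $B$" with the "almost surely". This is standard for a regular conditional distribution on a Polish space: restrict first to a countable $\pi$-system generating $\sB(\Rb)$ (e.g.\ rational half-lines in each coordinate, but here $d=1$ so rational intervals suffice), obtain a single null set off which the two probability measures agree on this $\pi$-system, and invoke a $\pi$–$\lambda$ (Dynkin) argument to conclude they agree on all of $\sB(\Rb)$. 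Since regular conditional distributions are assumed to exist throughout the paper, I would simply cite this as a well-known fact rather than spelling it out, and the proof reduces to the two-line composition in the previous paragraph.
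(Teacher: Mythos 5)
Your proposal is correct and follows exactly the route the paper takes: the paper proves this proposition as a ``direct observation'' from the fact that $\P_{X|\ssalg}=\P_X$ almost surely when $X$ is independent of $\ssalg$, combined with the concatenation definition $\gae(X|\ssalg)=\Tg(\P_{X|\ssalg})$ and law-determinedness. Your additional care about passing from ``for each fixed Borel $B$'' to ``a.s.\ as measures'' via a countable $\pi$-system is a standard detail the paper leaves implicit.
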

%\begin{proof}
%  It follows from the simple fact that $\P_{X|\ssalg} = \P_{X}$ almost
%  surely if $X$ is independent of $\ssalg$.
%\end{proof}

Following the approach of \cite{MR3734154}, it is alternatively
possible to consider a gauge function as a norm on the module of
$\ssalg$-measurable random variables. However, then a conditional
gauge is not necessarily law-determined. Furthermore, we do not only
work with sublinear gauges, as considered by \cite{MR3734154}.

\subsection{Examples}
\label{subsec:examples-gauge}

Below we mention main examples of gauges and their conditional
variants.

\paragraph{Quantiles.} These are gauges defined on $\Lpb[0](\R)$.

\begin{definition}
  For $\alpha\in(0,1]$, the \emph{lower quantile} of a random variable
  $X\in \Lpb[0](\R)$ is defined as
  \begin{displaymath}
    \quantile^-(X):=\inf\{t\in\R: \Prob{X\leq t}\geq \alpha\}, 
  \end{displaymath}
  and the \emph{upper quantile} is defined for $\alpha\in[0,1)$ as
  \begin{displaymath}
    \quantile^+(X):=\inf\{t\in\R: \Prob{X\leq t}> \alpha\}.
  \end{displaymath}
\end{definition}

As usual, we set $\inf \emptyset:=\infty$ and $\inf \R := -\infty$.
As shown by \cite{FisslerHolzmann2022}, quantile functionals are
$\sB(\Mp)/\sB(\Rb)$-measurable. They satisfy properties
(g1)\,--\,(g5), but not (g9). For $\alpha\in(0,1)$ they are neither
sub- nor superadditive. We refer to \cite{Castro:2023} for an
exhaustive overview of the properties of conditional quantiles. Even
though conditional quantiles are introduced differently, Theorem~2.6
from the cited paper shows that they coincide with those obtained
by our concatenation approach.

A special case of the lower quantile with $\alpha=1$ yields the
\emph{essential supremum} 
\begin{displaymath}
  \esssup X:= \quantile[1]^-(X),
\end{displaymath}
which is subadditive. Letting $\alpha=0$ in the upper quantile yields
the \emph{essential infimum}
\begin{displaymath}
  \essinf X:= \quantile[0]^+(X),
\end{displaymath}
which is superadditive.
These two functions are dual to each other. 
Moreover, it is easy to show that each
gauge function satisfies
\begin{displaymath}
  \essinf X\leq \gae(X)\leq \esssup X.
\end{displaymath}
Alternatively to the concatenation approach,
a construction of the conditional essential supremum (or infimum) can
be found in \citet[Appendix~A.5]{FoellmerSchied2004}.

\paragraph{Generalised expectation.}

Another typical choice for a gauge is the expectation, which is well
defined on $\Lpb[1](\R)$, it satisfies all properties (g1)\,--\,(g9)
and the expectation functional is $\sB(\Mp)/\sB(\Rb)$-measurable due
to \cite{FisslerHolzmann2022}.  The corresponding conditional gauge
function is the well known conditional expectation.  As detailed in
Remark~\ref{remark:extension}, we can extend the domain of the
conditional expectation to all $X\in\Lpb[0](\R)$ such that
$\P_{X| \ssalg}\in\Mp[1]$ almost surely.  This basically amounts to
considering the \emph{generalised conditional expectation}, see, e.g.,
\citet[Appendix~B]{lep:mol19}.

%  We say that the generalised conditional
%expectation of $X\in\Lp[0](\R)$ exists if there exists an
%$\ssalg$-measurable partition $\{A_i,i\geq1\}$ of $\Omega$ such that
%$X\one_{A_i}$ is integrable for all $i$. The generalised conditional
%expectation of a nonnegative $X$ is defined by letting
%\begin{displaymath}
%  \E^g(X|\ssalg)=\sum_{i=1}^\infty \E(X\one_{A_i}|\ssalg).
%\end{displaymath}
%It is known (see, e.g., \cite[Theorem~B.2]{lep:mol19}) that the
%generalised conditional expectation of $X\in\Lp[0](\R)$ 
%\Red{(Really without the bar?}
%exists if and
%only if $\E(|X|\mid\ssalg)<\infty$ a.s., and then $\E^g(X|\ssalg)$
%equals the difference of the generalised conditional expectations of
%the positive and negative parts of $X$. \Red{This construction shows
%  that the generalised conditional expectation can be consistently
%  defined on $\Lpb[1](\R)$.}  In this work we deal exclusively with
%generalised conditional expectations, and so write $\E(X|\ssalg)$
%instead of $\E^g(X|\ssalg)$.

\paragraph{Average quantiles.}
For a fixed value of $\alpha\in[0,1)$ and $X\in\Lpb[0](\R)$, define
the \emph{right-average quantile} 
\begin{equation}
  \label{eq:3}
  \ve_\alpha(X):=\frac{1}{1-\alpha}
  \int_{\alpha}^{1}\quantile[t]^-(X) dt.  
\end{equation}
Since $\quantile[t]^-(X)< \quantile[t]^+(X)$ only for at most countably many
$t\in(0,1)$ it is immaterial if the integrand is the lower or upper
quantile function.  
If $X$ describes a loss, then $\ve_\alpha$ is
known in quantitative risk management as the
\emph{average-value-at-risk}, also termed the \emph{expected
  shortfall}. It is a coherent risk measure and one of the most widely
used risk measures in the financial industry and in regulation, see \cite{MFE15}.  If
$X\in \Lp[1](\R)$, then $\ve_\alpha(X)$ is finite.  The right-average
quantiles satisfy properties (g1)\,--\,(g5), and subadditivity
(g7) together with (g9). For $\alpha=0$ and $X\in\Lpb[1](\R)$, we
simply obtain the expectation.

The \emph{left-average quantile}, arises from averaging
quantiles in the left tail. It is defined for $\alpha\in(0,1]$ and
$X\in\Lpb[1](\R)$ as
\begin{equation}
  \label{eq:3a}
  \ue_\alpha(X):=\frac{1}{\alpha}
  \int_{0}^{\alpha}\quantile[t]^-(X) dt.
\end{equation}
We have the dual relationship
\[
  \ue_\alpha(X) = -\ve_{1-\alpha}(-X).
\]
The functional $\ue_\alpha(X)$ is a superadditive gauge, satisfying
(g1)\,--\,(g5) and (g8), but not (g9). Measurability of $\ve_\alpha$
and $\ue_\alpha$ follows from \cite{FisslerHolzmann2022}, so that
their conditional versions are well defined.

\paragraph{Expectiles.}

Following \cite{NeweyPowell1987}, the \emph{$\tau$-expectile},
$\tau\in(0,1)$, of a random variable $X\in \Lpb[1](\R)$ is defined as
the unique solution to the equation
\[
  \tau \E\big[(X-z)_+\big] = (1-\tau) \E\big[(X-z)_-\big] 
\]
in $z\in\R$, where $x_+ = \max\{0,x\}$ and $x_- = \max\{0,-x\}$.  For
$\tau=1/2$, we retrieve the usual expectation.  The expectile is
finite on $\Lp[1](\R)$. It has been shown by
\cite{BelliniKlarETAL2014} that expectiles satisfy properties
(g1)\,--\,(g5), and that they are subadditive for $\tau\geq1/2$ and
superadditive for $\tau\le 1/2$. 
If $X=\infty$ with a positive
probability, we let the expectile take the value $\infty$, so that
(g9) holds ensured. The measurability property follows again from
\cite{FisslerHolzmann2022}.

% \paragraph{Entropic gauge.}

% This gauge is derived by changing the sign from the entropic risk
% measure, introduced by \cite{FoellmerSchied2004}. Let
% \begin{displaymath}
%   \gae(X):= \frac{1}{\gamma}\log\big(\E[\exp(\gamma X)]\big),
% \end{displaymath}
% for some parameter $\gamma>0$ and for $X\in\Lpb[0](\R)$.  It satisfies
% properties (g1)\,--\,(g5) and (g9).  Since it is a measurable function
% of an expectation of a measurable function of $X$, the
% $\sB(\Mp)/\sB(\Rb)$-measurability property is due to
% \cite{FisslerHolzmann2022}.

\paragraph{$\Lp$-norm based gauges.}
For $p\in[1,\infty)$, the $\Lp$-norm is defined on $\Lpb(\R)$ as
\begin{displaymath}
  % \label{eq:norm-gauge}
  \|X\|_p := \big(\E|X|^p\big)^{1/p}.
\end{displaymath}
This function is not translation equivariant and not necessarily
monotone. It is possible to turn it into a gauge on $\Lpb(\R)$
by letting
\begin{equation}
  \label{eq:norm-gauge}
  \ve^{p,a}(X):=
  \begin{cases}
    \E X+a\big(\E(X-\E X)_+^p\big)^{1/p}, &\text{if } X\in\Lp(\R),\\
    \infty, & \text{if }X=\infty \text{ with positive probability,}
  \end{cases}
\end{equation}
where $a\in[0,1]$. This gauge satisfies (g1)\,--\,(g5), (g6), (g7) and
(g9).  The $\sB(\Mp)/\sB(\Rb)$-measurability property follows as above.

\paragraph{Extensions of gauges.}
For each gauge, we define its maximum and minimum extensions as
\begin{align}
  \label{eq:max-ext}
  \gae^{\vee m}(X) &=\gae\big(\max(X_1,\dots,X_m)\big),\\
  \label{eq:min-ext}
  \gae^{\wedge m}(X) &=\gae\big(\min(X_1,\dots,X_m)\big),
\end{align}
where $X_1,\dots,X_m$ are i.i.d. copies of $X$. The maximum extension
preserves the sublinearity property and the minimum one preserves the
superlinearity property. This construction may
be used to obtain parametric families of gauges as was done by
\cite{mol:tur21} for sublinear expectations. 

%A conditional gauge is said to satisfy the tower property if
%\begin{displaymath}
%  \gae\big(\gae(X|\ssalg')|\ssalg\big)=\gae(X|\ssalg')
%\end{displaymath}
%for any two $\sigma$-algebras $\ssalg'\subseteq\ssalg$.  In
%particular, the tower property is satisfied if $\gae$ is the
%expectation or an entropic risk measure.

% A conditional gauge function $\gae(\cdot|\ssalg)$ is said to be
% \emph{dilatation monotone} (change name??? needed at all??)  if
% \begin{equation}
% \label{eq:dil_mon_1}
% \gae(\E(X|\ssalg')|\ssalg) \le \gae(X|\ssalg)
% \end{equation}
% almost surely for any other $\sigma$-algebra $\ssalg'$ such that
% $\ssalg \subseteq \ssalg'\subseteq\salg$. Note that a conditional
% quantile is generally not dilatation monotone.

% It can easily be seen that a dilatation monotone conditional gauge
% $\gae(\cdot|\ssalg)$ is bounded from below by the (generalised)
% conditional expectation, that is,
% \[
% \E(X|\ssalg) \le \gae(X|\ssalg).
% \]

\section{Random closed convex sets and related cones}
\label{sec:random-convex-sets}

For a nonempty set $F\subseteq\R^d$, the \emph{support function} is
defined as
\begin{displaymath}
  h(F,u):=\sup\big\{\langle u,x\rangle: x\in F\big\},\quad u\in\R^d.
\end{displaymath}
The support function of the empty set is defined to be $-\infty$. 
A support function can be identified by its subadditivity and positive
homogeneity properties. These properties are summarised by saying that
$h$ is sublinear. Each lower semicontinuous sublinear function is the
support function of a closed convex set, see \cite{schn2}. By
homogeneity, it suffices to restrict the support function onto the
unit sphere $\Sphere$.

Recall that a generic half-space $H_w(t)$ with normal vector
$w\in\R^d$ is defined at \eqref{eq:Hwt}.  Note that
$H_w(\infty)=H_0(t)=\R^d$ for any $w\in\R^d$ and $t\in\R$.  We let
$H_w(-\infty) = \emptyset$.  The support function of $H_w(t)$ is
finite only at $u=cw$ for $w\neq 0$ and $c\geq0$ and then takes the
value $ct$.

For a set $F\subseteq\R^d$ denote its \emph{polar} by
\begin{displaymath}
  F^o:=\big\{u\in\R^d:h(F,u)\leq1\big\}.
\end{displaymath}
If $F=\cone$ is a convex cone, then the polar to $\cone$ is
equivalently defined by
\begin{displaymath}
  \cone^o:=\big\{u\in\R^d: h(\cone,u)\leq 0\big\}.
\end{displaymath}
The \emph{barrier cone} $\bB_F$ of $F$ consists of all $u\in\R^d$ such
that $h(F,u)<\infty$. It is easy to see that $\bB_F$ is indeed a
convex cone and that $F^o\subseteq \bB_F$. If $F=\cone$ is a cone, then
$\bB_F=\cone^o$. 
%The polar cone to $\bB_F$ is called the asymptotic cone of $F$.

Denote by $\sF_c^d$ the family of closed convex sets in $\R^d$. A map
$\bX$ from the probability space $(\Omega,\salg,\P)$ to $\sF_c^d$ is
said to be a \emph{random closed convex set} if
$\{\omega: \bX\cap K\neq\emptyset\}\in\salg$ for all compact sets $K$,
see \cite{mo1}. If $\{\omega: \bX\cap K\neq\emptyset\}\in\ssalg$ for a
sub-$\sigma$-algebra $\ssalg\subseteq\salg$ and 
all compact sets $K$, then $\bX$ is said to be $\ssalg$-measurable.

Equivalently, $\bX$ is a random closed convex set if
and only if its support function $h(\bX,u)$ is a random function on
$\R^d$ which may take infinite values if $\bX$ is not bounded. The
only case when the support function takes value $-\infty$ is when
$\bX$ is empty.

By applying the definition of the barrier cone to realisations of
$\bX$, we obtain the barrier cone of $\bX$ 
\begin{displaymath}
  \BX:=\big\{u\in\R^d: h(\bX,u)<\infty\big\}.
\end{displaymath}
If $\bX$ is a.s.\ compact, then $\BX=\R^d$.
Note that $\BX$ is a random set itself, which is not necessarily
closed: its values are so-called $F_\sigma$ sets, being countable
unions of random closed sets $\{u\in\R^d: h(\bX,u)\leq n\}$, $n\geq1$,
see \citet[Lemma~1.8.23]{mo1}. Measurability of $\BX$ is understood
as graph measurability of the map
$\omega\mapsto \bB_{\bX(\omega)}$, see \citet[Sec.~1.3.6]{mo1}.  By
$\sigma(\bX)$ we denote the smallest $\sigma$-algebra which makes
$\bX$ measurable, and by $\sigma(\BX)$ the smallest $\sigma$-algebra
which makes $\BX$ graph measurable.

\begin{example}\label{exmp:solvency cone}
  Important examples of random sets are related to deterministic or
  random cones. In the following we denote a deterministic closed
  convex cone by $\cone$ and a random one by $\bC$. If $\bX=\bC$, then
  $\BX=\bC^o$ and so $\BX$ is a random closed convex set. Let
  $\bX=X+\bC$, where $X$ is a random vector and $\bC$ is a random
  closed convex cone in $\R^d$ which contains $(-\infty,0]^d$. In the
  financial setting, such a cone is called the set of portfolios
  available at price zero, see \cite{kab:saf09}.
%  The reflected cone $-\bC$ is called the solvency cone.
  The polar cone $\bC^o$ is said to be a cone of consistent price
  systems.
\end{example}

A random vector $\xi$ is called a \emph{selection} of $\bX$ if
$\xi\in \bX$ almost surely. Let $\Lp(\bX)$ denote the family of
$p$-integrable $\sigma(\bX)$-measurable selections of $\bX$ for
$p\in[1,\infty)$, essentially bounded ones if $p=\infty$, and all
selections if $p=0$. Sometimes, it is convenient to consider
selections which are measurable with respect to a larger
$\sigma$-algebra than the one generated by $\bX$, i.e., such selections may
involve extra randomisation. For instance, a deterministic convex set
may have random selections.

A point $x\in\R^d$ is said to be a \emph{fixed point} of $\bX$ if
$\Prob{x\in\bX}=1$. Clearly a fixed point is a selection of $\bX$. 

If $\Lp(\bX)$ is not empty, then $\bX$ is called
\emph{$p$-integrable}, shortly, \emph{integrable} if $p=1$.
This is the case if $\bX$ is \emph{$p$-integrably bounded}, that is,
\begin{displaymath}
  \|\bX\|:=\sup\big\{\|x\|:\; x\in \bX\big\}
\end{displaymath}
is a $p$-integrable random variable (essentially bounded if
$p=\infty$).
If $p=0$, then the $p$-inegrability of $\bX$ means that
$\bX$ is almost surely nonempty, equivalently, $\Lp[0](\bX)$ is not
empty.

Each a.s.\ nonempty random closed set $\bX$ is the closure of a
countable family of its selections
$\{\xi_n,n\geq1\}\subset\Lp[0](\bX)$. This is called a \emph{Castaing
  representation} of $\bX$, see \cite[Definition~1.3.6]{mo1}. Let us
stress that all members of a Castaing representation are assumed to be
measurable with respect to the $\sigma$-algebra $\sigma(\bX)$
generated by $\bX$.  If $\bX$ is $p$-integrable, then all selections
in its Castaing representation can be also chosen to be
$p$-integrable. A random closed set in $\R^d$ is said to be
\emph{regular closed} if it almost surely coincides with the closure
of its interior. If $\bX$ is regular closed and admits a fixed point,
then it has a Castaing representation which consists of the random
vectors $\xi_n:=u_n\one_{u_n\in\bX}+x\one_{u_n\notin\bX}$, where
$\{u_n,n\geq1\}$ is a countable dense set in $\R^d$ and $x$ is a fixed
point of $\bX$. 

If $\bX$ is integrable, then its \emph{selection expectation} is
defined by 
\begin{equation}
  \label{eq:3exp}
  \E \bX:=\clo\big\{\E \xi:\; \xi\in\Lp[1](\bX)\big\},
\end{equation}
which is the closure of the set of expectations of all integrable
selections of $\bX$, see \cite[Section~2.1.2]{mo1}. If $\bX$ is integrably
bounded, then the closure on the right-hand side is not needed and
$\E \bX$ is compact. This expectation (and its conditional variant)
can be equivalently defined using the support function, see
Section~\ref{sec:gener-expect}, thus providing a dual construction of
the selection expectation. 

If $\bX$ a.s.\ attains compact values, then
\begin{equation}
  \label{eq:4un}
  \bX=\bigcap_{n\geq1} H_{w_n}\big(h(\bX,w_n)\big),
\end{equation}
where $\{w_n,n\geq1\}$ is a countable dense set in $\Sphere$. This
representation may fail if $\bX$ is unbounded with positive
probability, e.g., if $\bX$ is a random half-space. Still, even if
\eqref{eq:4un} fails, \citet[Theorem~3.4]{lep:mol19} establishes that
each random closed convex set satisfies
\begin{equation}
  \label{eq:30}
  \bX=\bigcap_{n\geq1} H_{\eta_n}\big(h(\bX,\eta_n)\big),
\end{equation}
where $\{\eta_n,n\geq1\}\subset\Lp[0](\R^d)$ is a family of
$\salg$-measurable random vectors, whose choice may depend on
$\bX$. By scaling with $\|\eta_n\|$ if $\eta_n\neq 0$ and letting
$H_0(t)=\R^d$ if $\eta_n=0$, it is possible to assume that
$\eta_n\in\Sphere$ a.s. for all $n$. The following result generalises
this by showing that the $\eta_n$'s can be taken to be
$\sigma(\BX)$-measurable and even constants if $\BX$ is regular
closed.

\begin{theorem}
  \label{thr:lep-mol-bis}
  Let $\bX$ be an almost surely nonempty random closed convex set
  in $\R^d$. Then, for each Castaing representation
  $\{W_n,n\geq1\}$ of $\BX\cap\Sphere$,  
  \begin{equation}
    \label{eq:repr}
    \bX=\bigcap_{n\geq1} \Big\{x\in\R^d:\langle W_n,x\rangle\leq
    h(\bX,W_n)\Big\}. 
  \end{equation}
 If $\BX$ is regular closed, then \eqref{eq:repr} holds with
  $W_n=w_n$, where $\{w_n,n\geq1\}$ is a deterministic dense set in
  $\Sphere$.
\end{theorem}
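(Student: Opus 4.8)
The plan is to start from the known representation \eqref{eq:30}, namely $\bX=\bigcap_{n\ge1}H_{\eta_n}(h(\bX,\eta_n))$ with $\salg$-measurable $\eta_n\in\Sphere$, and to show that each contributing half-space can be rewritten using directions from $\BX\cap\Sphere$, and then that an arbitrary Castaing representation $\{W_n\}$ of $\BX\cap\Sphere$ already suffices. First I would observe that whenever $\eta_n\notin\BX$ on some event, the support function $h(\bX,\eta_n)$ equals $+\infty$ there, so $H_{\eta_n}(h(\bX,\eta_n))=\R^d$ and that half-space contributes nothing; hence in \eqref{eq:30} one may replace $\eta_n$ by $\eta_n\one_{\eta_n\in\BX}$ (using $H_0(t)=\R^d$), i.e.\ assume without loss of generality that each $\eta_n$ is a selection of $\BX\cap\Sphere$ (after discarding the null event where $\eta_n=0$, or absorbing it as $\R^d$). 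Thus $\bX$ is the intersection of half-spaces indexed by finitely or countably many selections of $\BX\cap\Sphere$.

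Next I would invoke the defining property of a Castaing representation: $\{W_n,n\ge1\}$ is dense in $\BX\cap\Sphere$ pointwise a.s., so each selection $\eta_k$ of $\BX\cap\Sphere$ is an a.s.\ limit of a subsequence of the $W_n$'s (on a fixed almost sure event). The key step is then a closedness/continuity argument: I want to show that $\bigcap_{n\ge1}H_{W_n}(h(\bX,W_n))\subseteq H_{\eta_k}(h(\bX,\eta_k))$ for every $k$, which combined with the reverse trivial inclusion $\bX\subseteq\bigcap_n H_{W_n}(h(\bX,W_n))$ (valid since each $W_n\in\BX$, so $h(\bX,W_n)<\infty$ and $\langle W_n,x\rangle\le h(\bX,W_n)$ for all $x\in\bX$) pins down the intersection to be exactly $\bX$ by \eqref{eq:30}. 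Fix $x$ in the left-hand intersection and a realisation $\omega$ in the good event. Then $\langle W_n,x\rangle\le h(\bX,W_n)$ for all $n$; picking $W_{n_j}\to\eta_k$ and using that the support function $w\mapsto h(\bX(\omega),w)$ is upper semicontinuous on its (relatively open in the cone) domain — more precisely, lower semicontinuous as a sublinear function, but one needs \emph{upper} semicontinuity along sequences staying inside $\BX$ — I pass to the limit. This is exactly where care is needed: the support function of a closed convex set is lower semicontinuous globally but need not be continuous on the barrier cone, so $h(\bX,\eta_k)\ge\limsup_j h(\bX,W_{n_j})$ is not automatic.

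The main obstacle, therefore, is the upper-semicontinuity issue at the boundary of the barrier cone. I expect to handle it by a truncation/exhaustion argument: write $\bX_m:=\bX\cap mB^d$ (ball of radius $m$), which is compact, so $h(\bX_m,\cdot)$ is finite and continuous everywhere and $h(\bX_m,w)\uparrow h(\bX,w)$ as $m\to\infty$ for each $w\in\BX$; apply the countable representation \eqref{eq:4un} to each compact $\bX_m$ with the \emph{deterministic} dense set, intersect over $m$, and then transfer the directions. Alternatively, and perhaps more cleanly, I would use that $x\in\bX$ iff $\langle w,x\rangle\le h(\bX,w)$ for all $w\in\Sphere$ iff the same holds for all $w\in\BX\cap\Sphere$ (directions outside $\BX$ give the vacuous constraint $\le\infty$), and then show the countable dense subfamily $\{W_n\}$ of $\BX\cap\Sphere$ already cuts out the same set: for a fixed $x\notin\bX$ there is $w\in\BX\cap\Sphere$ with $\langle w,x\rangle>h(\bX,w)$, and by continuity of $w\mapsto\langle w,x\rangle$ together with upper semicontinuity of $h(\bX,\cdot)$ \emph{at that particular interior-of-effective-domain point} one finds a nearby $W_n$ with $\langle W_n,x\rangle>h(\bX,W_n)$; the remaining subtlety is when the separating $w$ lies on the relative boundary of $\BX$, where I would perturb $w$ slightly into the relative interior of $\BX$ (possible since $h(\bX,\cdot)$ is real-valued and convex, hence continuous on $\mathrm{ri}\,\BX$, and $\langle w,x\rangle>h(\bX,w)$ persists under small perturbation because the left side is continuous and the right side is upper semicontinuous there). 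Finally, the last sentence of the theorem — that $W_n$ may be taken to be deterministic $w_n$ when $\BX$ is regular closed — follows because a regular closed cone is the closure of the interior, a fixed deterministic dense sequence in $\Sphere$ intersected with $\BX$ (via the indicator-swap Castaing representation already described in the text for regular closed sets with a fixed point, here the fixed point $0$ of the cone $\BX$) yields such a representation, and one checks the dense constraints reduce to the deterministic ones by the same relative-interior density argument.
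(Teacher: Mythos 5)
Your final argument (the ``alternatively, and perhaps more cleanly'' route) is correct, but it proves the theorem by a genuinely different method than the paper. The paper avoids all continuity questions for the support function: assuming $0\in\bX$, it observes $\bX^o\cap\Sphere=\BX\cap\Sphere$, builds a two-level Castaing representation $\{W_n\zeta_{nm}\}$ of $\bX^o$ along the random rays $\{W_nt:t\ge0\}$, and applies the bipolar theorem $\bX=(\bX^o)^o$ to write $\bX$ as the polar of a countable set of selections; the bound $h(\bX,W_n\zeta_{nm})\le1$ and positivity of $\zeta_{nm}$ then yield \eqref{eq:repr} directly, and a translation by a selection removes the assumption $0\in\bX$. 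You instead argue by separation: for $x\notin\bX(\omega)$ you produce $w\in\BX\cap\Sphere$ with $\langle w,x\rangle>h(\bX,w)$, push $w$ along a segment towards a point of $\mathrm{ri}\,\BX$ (where the convexity bound $h(\bX,w_t)\le(1-t)h(\bX,w)+t\,h(\bX,u_0)$ keeps the strict inequality), and then exploit local Lipschitz continuity of the convex function $h(\bX,\cdot)$ on $\mathrm{ri}\,\BX$ together with density of $\{W_n(\omega)\}$ in $\BX(\omega)\cap\Sphere$ to find a violating $W_n$. That works, realisation by realisation on the full-measure event where the Castaing property holds, and it buys a proof that does not need the bipolar theorem; the paper's proof buys freedom from any semicontinuity analysis and transfers more transparently to the regular closed case.

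Two caveats on the parts of your proposal that precede the final argument. First, your opening route via \eqref{eq:30} does have the gap you yourself identify: knowing $W_{n_j}\to\eta_k$ gives $\langle\eta_k,x\rangle\le\liminf_j h(\bX,W_{n_j})$, and lower semicontinuity of the support function pushes that $\liminf$ the \emph{wrong} way, so nothing forces it down to $h(\bX,\eta_k)$. Second, the truncation fix does not repair this as stated: $\bX=\clo\bigcup_m(\bX\cap mB^d)$ is an increasing union, not an intersection, so intersecting the representations \eqref{eq:4un} of the truncations over $m$ produces $\bigcap_m\bigcap_n H_{w_n}(h(\bX_m,w_n))=\bigcap_n H_{w_n}(\inf_m h(\bX_m,w_n))=\bigcap_n H_{w_n}(h(\bX_1,w_n))=\bX_1$, and replacing the infimum by the supremum $h(\bX,w_n)$ just reproduces the original question. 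Since your separation argument carries the whole proof on its own, I would drop the first route and the truncation remark entirely and present only the relative-interior argument, taking care to state explicitly that $h(\bX,\cdot)$ is locally Lipschitz at points of $\mathrm{ri}\,\BX$ relative to $\mathrm{aff}\,\BX$ and that the perturbed direction, once renormalised to $\Sphere$, stays in $\mathrm{ri}\,\BX$ because the relative interior of a convex cone is stable under positive scaling.
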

\begin{proof}
  Assume first that $0\in\bX$ a.s. Then
  \begin{displaymath}
    \big\{u\in\R^d: h(\bX^o,u)>0\big\}=\BX\setminus\{0\}.
  \end{displaymath}
  Indeed, for $u\neq0$, we have $h(\bX^o,u)>0$ if and only if
  $h(\bX,u)<\infty$ and so $u\in\BX$. In particular,
  $\bX^o\cap\Sphere=\BX\cap\Sphere$ a.s. Let $\{W_n, n\geq1\}$ be a
  Castaing representation of $\BX\cap\Sphere$, which consists of
  $\sigma(\BX)$-measurable random vectors. Then $\bX^o$ is the closure
  of the union of $\bX^o\cap\{W_nt:t\geq0\}$ for $n\geq1$. For each
  $n\geq1$, let $\{W_n\zeta_{nm},m\geq1\}$ be a Castaing
  representation of $\bX^o\cap\{W_nt:t\geq0\}$.
  % It is always possible to ensure that
  % $\zeta_{nm}>0$ a.s.
  Note that $\zeta_{nm}$ is $\salg$-measurable for all $n$ and $m$.

  By the Bipolar theorem, $\bX=(\bX^o)^o$. Hence, $\bX$ is the polar
  set to $\{W_n\zeta_{nm},m,n\geq1\}$, meaning that
  \begin{align*}
    \bX&=\bigcap_{n,m\geq1} \big\{x\in\R^d: \langle
         W_n\zeta_{nm},x\rangle\leq1\big\}\\
       &=\bigcap_{n,m\geq1} \big\{x\in\R^d: \langle
         W_n\zeta_{nm},x\rangle\one_{\zeta_{nm}>0}\leq1\big\}. 
  \end{align*}
  Since $W_n\zeta_{nm}\in\bX^o$ a.s., we have 
  $h(\bX,W_n\zeta_{nm})\leq1$ a.s. Hence,
  \begin{displaymath}
    \bX\supseteq \bigcap_{n,m\geq1} \big\{x\in\R^d: \langle
    W_n\zeta_{nm},x\rangle\leq h(\bX,W_n\zeta_{nm})\big\}.
  \end{displaymath}
  Since $\zeta_{nm}>0$ a.s., 
  \begin{displaymath}
    \bX\supseteq \bigcap_{n\geq1} \big\{x\in\R^d: \langle
    W_n,x\rangle\leq h(\bX,W_n)\big\}.
  \end{displaymath}
  The opposite inclusion is obvious.
  
  Assume now that $\BX$ is regular closed, so that $\BX\cap\Sphere$ is
  regular closed in the relative topology of $\Sphere$. Then
  $W_n:=w_n\zeta_{nm}\one_{w_n\in \BX}\one_{\zeta_{nm}>0}$, $n\geq1$,
  is a Castaing representation of $\bX^o$ and the above argument
  applies.

  If $\bX$ does not necessarily contain the origin, let $\bY=\bX-\xi$
  for an arbitrary $\xi\in\Lp[0](\bX)$. Note that the barrier cone of
  $\bY$ coincides with the barrier cone of $\bX$, so that their
  Castaing representations are identical.  Then
  \begin{align*}
    \bY&=\bigcap_{n\geq1} \big\{y\in\R^d: \langle
    W_n,y\rangle\leq h(\bY,W_n)\big\}\\
    &=\bigcap_{n\geq1} \big\{y\in\R^d: \langle
      W_n,y+\xi\rangle\leq h(\bX,W_n)\big\}\\
    &=\bigcap_{n\geq1} \big\{x\in\R^d: \langle
      W_n,x\rangle\leq h(\bX,W_n)\big\}-\xi.
  \end{align*}
  Finally, note that $x\in\bX$ if and only if $x-\xi\in\bY$.
\end{proof}

If $\bX$ and $\bY$ are two random closed convex sets, then
$\bX\subseteq\bY$ a.s.\ if $h(\bX,W)\leq h(\bY,W)$ a.s. for all
$W\in \Lp[0](\R^d)$, see \citet[Corollary~3.6]{lep:mol19}, or
equivalently, by rescaling for all $W\in \Lp[0](\Sphere)$.  The
following result shows that in some cases it is possible to reduce the
choice of $W$ to obtain the same characterisation.
  
\begin{lemma}
  \label{lemma:subset}
  Let $\bX$ and $\bY$ be two random closed convex sets. Assume that
  $\bY$ is a random compact convex set or that $\BY$ is regular closed.
  Then $\bX\subseteq\bY$ a.s.\ if and only if $h(\bX,w_n)\leq h(\bY,w_n)$
  a.s. for any countable dense set $\{w_n,n\geq1\}\subset\Sphere$.
\end{lemma}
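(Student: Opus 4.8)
The forward implication is immediate and needs no hypothesis on $\bY$: if $\bX\subseteq\bY$ a.s., then $\langle w,x\rangle\le h(\bY,w)$ for every $x\in\bX$ and every deterministic $w$, whence $h(\bX,w)\le h(\bY,w)$ a.s.\ for all $w\in\R^d$, in particular along any countable dense sequence in $\Sphere$. So the content of the lemma is entirely in the converse, which I would prove in two steps.

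First, fix a countable dense set $\{w_n,n\ge1\}\subset\Sphere$ for which $h(\bX,w_n)\le h(\bY,w_n)$ a.s. The initial step is a pointwise-in-$\omega$ observation: by the definition of the support function one has $\bX\subseteq H_{w_n}(h(\bX,w_n))$ surely, and since $t\mapsto H_{w_n}(t)$ is nondecreasing, the assumed inequality upgrades this to $\bX\subseteq H_{w_n}(h(\bY,w_n))$ on a full-measure event $\Omega\setminus N_n$. Setting $N=\bigcup_n N_n$, which is still null, one gets on $\Omega\setminus N$ that $\bX\subseteq\bigcap_{n\ge1}H_{w_n}(h(\bY,w_n))$.

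The second step is to recognise the right-hand side as $\bY$, and this is exactly where the two structural assumptions enter. If $\bY$ is a random compact convex set, then $h(\bY,\cdot)$ is finite and continuous on $\Sphere$, so the reconstruction $\bY=\bigcap_{n\ge1}H_{w_n}(h(\bY,w_n))$ of \eqref{eq:4un} holds for an arbitrary countable dense $\{w_n\}$, in particular for the one fixed above. If instead $\BY$ is regular closed, the same reconstruction (again with a deterministic dense set) is precisely Theorem~\ref{thr:lep-mol-bis} applied to $\bY$. In either case, intersecting the full-measure event from the first step with the full-measure event $\{\bY=\bigcap_n H_{w_n}(h(\bY,w_n))\}$ gives $\bX\subseteq\bY$ a.s.

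I expect this second step to be the only delicate point, and it is the reason the hypotheses cannot be dropped: for a general unbounded random convex set the intersection $\bigcap_n H_{w_n}(h(\bY,w_n))$ over a fixed countable set of deterministic directions can strictly contain $\bY$ — a random half-space with non-atomic normal direction is the standard counterexample — so $\bY$ is not pinned down by its support function along a deterministic dense set. Compactness of $\bY$ or regular closedness of $\BY$ is exactly what removes this obstruction and licenses the appeal to \eqref{eq:4un} or Theorem~\ref{thr:lep-mol-bis}. (If the hypothesis is read as holding for \emph{every} countable dense set, nothing changes: one simply applies it to whichever dense set the cited representation requires.)
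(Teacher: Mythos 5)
Your proof is correct and follows essentially the same route as the paper: the converse reduces to the reconstruction $\bY=\bigcap_{n\ge1}H_{w_n}(h(\bY,w_n))$ along a deterministic dense set, which holds by \eqref{eq:4un} in the compact case and by Theorem~\ref{thr:lep-mol-bis} when $\BY$ is regular closed, and then the assumed inequalities give $\bY\supseteq\bigcap_n H_{w_n}(h(\bX,w_n))\supseteq\bX$ a.s. Your additional remarks on the null-set bookkeeping and the half-space counterexample are consistent with the paper's Example~\ref{ex:half-space} but add nothing beyond its argument.
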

\begin{proof}
  Under the imposed conditions,  \eqref{eq:4un} holds for $\bY$ and implies that
  \begin{align*}
    \qquad\qquad
    \bY&=\bigcap_{n\geq1} \big\{x\in\R^d:\langle w_n,x\rangle\leq
    h(\bY,w_n)\big\}\\
    &\supseteq \bigcap_{n\geq1} \big\{x\in\R^d:\langle w_n,x\rangle\leq
    h(\bX,w_n)\big\}\supseteq \bX\quad \text{a.s.}\qquad\qquad \qedhere
  \end{align*}
\end{proof}

% \begin{lemma}[see \cite{mol:mueh21}]
%   \label{lemma:sel-domination}
%   Let $\bX,\bY$ be $p$-integrable random closed convex sets. If
%   $\E h(\bY,W)\leq \E h(\bX,W)$ for all $W\in\Lp[q](\R^d)$. Then
%   $\bY'\subset\bX'$ a.s., for suitably constructed distributional
%   copies $\bX',\bY'$ of $\bX,\bY$, respectively. 
% \end{lemma}

\section{Conditional set-valued gauge functions}
\label{sec:cond-set-valu}

Fix a $p\in\{0\}\cup [1,\infty]$.  Let $\bX$ be a $p$-integrable
random closed convex set, which is then almost surely nonempty.  For
each $W\in\Lp[0](\R^d)$, the support function $h(\bX,W)$ is a random
variable with values in $(-\infty,\infty]$, see
\citet[Lemma~3.1]{lep:mol19}. While $h(\bX,W)$ is not necessarily
integrable, its negative part is always integrable if $\bX$ is
$p$-integrable and $W\in\Lp[q](\R^d)$, where $1/p+1/q=1$. Indeed,
choose any $\xi\in\Lp(\bX)$, and write
\begin{displaymath}
  h(\bX,W)=h(\bX-\xi,W)+\langle \xi,W\rangle.
\end{displaymath}
The second summand on the right-hand side is integrable, while the
first one is nonnegative. Thus, $h(\bX,W)\in\Lpb[1](\R)$.

Consider a gauge function $\gae$ satisfying the conditions of Proposition \ref{prop:conditional-g}.  For each $W\in\Lp[\infty](\R^d)$,
the random variable $h(\bX,W)$ belongs to $\Lpb(\R)$, so that its
conditional gauge $\gae(h(\bX,W)|\ssalg)$ is well defined (up to an
almost sure equivalence).
%$\Lp(\R)+\Lp[0]([0,\infty])$, meaning
%that its negative part is $p$-integrable. If $\gae$ is a gauge on
%$\Lp(\R)$, then $\gae(h(\bX,W)|\ssalg)$ is well defined (up to an
%almost sure equivalence). If $\bX$ is a general random closed convex
%set and $\gae$ is defined on $\Lp[0](\Rb)$, then
%$\gae(h(\bX,W)|\ssalg)$ is also well defined.
Then $H_W\big(\gae(h(\bX,W)|\ssalg)\big)$ is a random half-space, which
becomes the whole space if $\gae(h(\bX,W)|\ssalg)=\infty$, and is empty
if $\gae(h(\bX,W)|\ssalg)=-\infty$. 
Note that this half-space does not change if $W$ is scaled, that is,
if $W$ is replaced by $\gamma W$ for a random variable
$\gamma\in\Lp[\infty]\big((0,\infty);\ssalg\big)$.

\begin{theorem}
  \label{thr:maximal}
  Fix a $p\in\{0\}\cup[1,\infty]$.  Let $\bX$ be a $p$-integrable
  random closed convex set.
  Let $\ssalg$ be a sub-$\sigma$-algebra of $\salg$.  There exists the
  largest (in the inclusion order) $\ssalg$-measurable random closed
  convex set $\bY$ such that
  \begin{equation}
    \label{eq:6}
    h(\bY,W)\leq \gae(h(\bX,W)|\ssalg) \quad\text{a.s.}
  \end{equation}
  for all $W\in \Lp[\infty](\R^d;\ssalg)$.
  % $W\in \Lp[\infty](\BX;\ssalg)$.
\end{theorem}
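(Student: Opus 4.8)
The plan is to realise $\bY$ as the maximal element of the family
\[
  \mathcal{Y}:=\bigl\{\bZ:\ \bZ\text{ an }\ssalg\text{-measurable random closed convex set with }h(\bZ,W)\le\gae(h(\bX,W)|\ssalg)\text{ a.s.\ for all }W\in\Lp[\infty](\R^d;\ssalg)\bigr\}
\]
by a lattice-theoretic exhaustion argument. First I would note that $\mathcal{Y}$ is nonempty: since $\bX$ is $p$-integrable, $h(\bX,W)\in\Lpb(\R)$ and the conditional gauge $\gae(h(\bX,W)|\ssalg)$ is well defined for every $W\in\Lp[\infty](\R^d;\ssalg)$ (as established above), and the constant map $\bZ\equiv\emptyset$ belongs to $\mathcal{Y}$ because $h(\emptyset,W)=-\infty$. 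Depending on $\bX$, $\ssalg$ and $\gae$ the maximal element may well be empty, so no nonemptiness claim for $\bY$ is needed or made.

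The crucial structural fact is that $\mathcal{Y}$ is stable under countable closed convex hulls of unions. If $\{\bZ_n,n\ge1\}\subset\mathcal{Y}$ and $\bZ_\infty:=\clo\,\mathrm{conv}\bigl(\bigcup_{n\ge1}\bZ_n\bigr)$, then $\bZ_\infty\in\mathcal{Y}$: it is again an $\ssalg$-measurable random closed convex set because countable unions, convex hulls and closures of random closed sets are random closed sets and these operations preserve $\ssalg$-measurability \citep{mo1}; and for each fixed $W\in\Lp[\infty](\R^d;\ssalg)$ the support function of a closed convex hull of a union is the supremum of the support functions, i.e.\ $h(\bZ_\infty,W)=\sup_{n\ge1}h(\bZ_n,W)$ pointwise in $\omega$, so that off the countable union of the $\P$-null sets on which $h(\bZ_n,W)\le\gae(h(\bX,W)|\ssalg)$ fails we obtain $h(\bZ_\infty,W)\le\gae(h(\bX,W)|\ssalg)$ a.s. In particular $\clo\,\mathrm{conv}(\bZ_1\cup\bZ_2)\in\mathcal{Y}$ whenever $\bZ_1,\bZ_2\in\mathcal{Y}$.

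To run the exhaustion I would fix a countable dense set $\{u_k,k\ge1\}\subset\R^d$ and weights $c_k>0$ with $\sum_k c_k<\infty$, and introduce the size functional
\[
  \phi(\bZ):=\sum_{k\ge1}c_k\,\E\bigl[e^{-d(u_k,\bZ)}\bigr],\qquad\text{where }d(u,F):=\inf\{\|u-x\|:x\in F\},\ \ d(u,\emptyset):=\infty,
\]
so that $\phi(\bZ)\in[0,\sum_{k\ge1}c_k]$ for every random closed set $\bZ$. If $\bZ_1\subseteq\bZ_2$ a.s.\ then $d(\cdot,\bZ_1)\ge d(\cdot,\bZ_2)$, hence $\phi(\bZ_1)\le\phi(\bZ_2)$; and if moreover $\phi(\bZ_1)=\phi(\bZ_2)$, then the summands $c_k\,\E[e^{-d(u_k,\bZ_2)}-e^{-d(u_k,\bZ_1)}]$ are nonnegative with zero sum, so each vanishes, forcing $d(u_k,\bZ_1)=d(u_k,\bZ_2)$ a.s.\ for every $k$; then a.s.\ the $1$-Lipschitz functions $d(\cdot,\bZ_1)$ and $d(\cdot,\bZ_2)$ agree on the dense set $\{u_k\}$, hence everywhere, whence $\bZ_1=\bZ_2$ a.s. Now set $s:=\sup\{\phi(\bZ):\bZ\in\mathcal{Y}\}$, choose $\bZ_n\in\mathcal{Y}$ with $\phi(\bZ_n)\to s$, and put $\bY:=\clo\,\mathrm{conv}\bigl(\bigcup_n\bZ_n\bigr)$; by the stability property $\bY\in\mathcal{Y}$, and since $\bY\supseteq\bZ_n$ for every $n$, monotonicity gives $\phi(\bY)\ge s$, hence $\phi(\bY)=s$. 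Finally, for an arbitrary $\bZ\in\mathcal{Y}$ the set $\clo\,\mathrm{conv}(\bY\cup\bZ)$ lies in $\mathcal{Y}$, contains $\bY$, and so has $\phi$-value equal to $s=\phi(\bY)$; strict monotonicity of $\phi$ then yields $\clo\,\mathrm{conv}(\bY\cup\bZ)=\bY$ a.s., i.e.\ $\bZ\subseteq\bY$ a.s. Thus $\bY$ is the largest element of $\mathcal{Y}$, which is exactly the assertion.

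The step I expect to demand the most care is the stability property of $\mathcal{Y}$: one has to invoke the correct measurability-preservation results for random closed sets to confirm that $\clo\,\mathrm{conv}\bigl(\bigcup_n\bZ_n\bigr)$ is genuinely $\ssalg$-measurable, and one has to track the $\P$-null exceptional sets carefully so that the a.s.\ inequalities \eqref{eq:6} for the individual $\bZ_n$ (at a fixed direction $W$) combine into the same inequality for the hull. By contrast, the uncountability of the index family $\Lp[\infty](\R^d;\ssalg)$ is harmless, since \eqref{eq:6} is a statement about one direction $W$ at a time and the exhaustion only ever forms closed convex hulls of countably many members of $\mathcal{Y}$.
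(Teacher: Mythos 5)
Your proof is correct, but it takes a genuinely different route from the paper's. The paper forms the (generally non-$\ssalg$-measurable) uncountable intersection $\bY'=\bigcap_{W} H_W\big(\gae(h(\bX,W)|\ssalg)\big)$ over all $W\in\Lp[\infty](\R^d;\ssalg)$ and then invokes Lemma~4.3 of \cite{lep:mol19}, which supplies the largest $\ssalg$-measurable random closed set contained in $\bY'$ (its ``measurable version''); maximality is then checked by a short contradiction argument adjoining an $\ssalg$-measurable point $\xi$. You instead never form $\bY'$: you prove the existence of a maximal element of the family $\mathcal{Y}$ directly by an exhaustion argument --- stability of $\mathcal{Y}$ under countable closed convex hulls of unions, a bounded size functional $\phi$ built from distance functions that is strictly monotone with respect to a.s.\ inclusion, and a maximizing sequence. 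This is in effect a self-contained re-proof of the essential-supremum machinery that the cited lemma encapsulates, specialised to $\mathcal{Y}$. What your route buys: the null sets are tracked one direction $W$ at a time, which is exactly the right bookkeeping for the a.s.\ inequalities in \eqref{eq:6}, and you avoid having to justify that the uncountable intersection $\bY'$ is a random closed set (a point the paper only secures ``in a suitably rich $\sigma$-algebra''). What the paper's route buys: brevity, and the explicit identification of $\bY$ as the measurable version of the concrete set in \eqref{eq:5}, which is reused later (e.g.\ in Remark~\ref{rem:lm} and Example~\ref{ex:half-space}). One small remark: in your strict-monotonicity step the phrase ``$1$-Lipschitz functions'' tacitly assumes the realisations are nonempty; the empty case is nonetheless covered by your convention $d(u,\emptyset)=\infty$, since agreement of the distance functions on the dense set $\{u_k\}$ forces the two sets to be empty simultaneously.
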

\begin{proof}
  %Note that $h(\bX,W)\in\Lp(\R)+\Lp[0](\R_+)$, so that
  %$\gae(h(\bX,W)|\ssalg) $ is well defined.
  Consider
  \begin{equation}
    \label{eq:5}
    \bY':=\bigcap_{W\in \Lp[\infty](\R^d;\ssalg)}
    H_W\big(\gae(h(\bX,W)|\ssalg)\big).
  \end{equation}
  By construction, $\bY'$ has closed realisations, so it is a random
  closed set in a $\sigma$-algebra chosen to be suitably rich. By
  \citet[Lemma~4.3]{lep:mol19}, there exists the largest
  $\ssalg$-measurable random closed set $\bY$ such that
  $\bY\subseteq\bY'$ almost surely (it is called the measurable
  version of $\bY'$). Recall that the probability space is assumed
  to be complete.

  By construction, $\bY$ satisfies \eqref{eq:6}. Assume that $\bY$ is
  not the largest set which satisfies \eqref{eq:6}. Then there exists
  an $\ssalg$-measurable random vector $\xi$ such that
  $\xi\notin\bY$ with a positive probability and $\bZ=\bY\cup\{\xi\}$
  satisfies \eqref{eq:6}. However, then $\bY'\cup\{\xi\}$ is a subset
  of the right-hand side of \eqref{eq:5}. Hence, $\bY\cup\{\xi\}$ is an
  $\ssalg$-measurable subset of $\bY'$, which is a contradiction
  unless $\xi\in\bY$ a.s.
\end{proof}

The set $\bY$ from Theorem~\ref{thr:maximal} is denoted as
$\Gae(\bX|\ssalg)$ and is called a \emph{conditional set-valued gauge}
of $\bX$ given $\ssalg$.  The function $\Gae(\cdot|\ssalg)$ is said to
be the set-valued extension of the (scalar) conditional gauge
$\gae(\cdot|\ssalg)$. We replace in our notation $\ssalg$ by a
random variable, if $\ssalg$ is generated by it.

\begin{lemma}
  Condition \eqref{eq:6} can equivalently be imposed for all
  $W\in\Lp[0](\Sphere;\ssalg)$ or for all $W\in\Lp[0](\R^d;\ssalg)$.
\end{lemma}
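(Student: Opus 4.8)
The plan is to show the three families of test vectors produce the same maximal set $\bY$ by establishing two inclusions of feasible families. Write $\mathcal{W}_\infty=\Lp[\infty](\R^d;\ssalg)$, $\mathcal{W}_0=\Lp[0](\R^d;\ssalg)$, and $\mathcal{W}_{\mathbb{S}}=\Lp[0](\Sphere;\ssalg)$, and for each of these let $\bY_\infty$, $\bY_0$, $\bY_{\mathbb{S}}$ denote the corresponding largest $\ssalg$-measurable random closed convex set satisfying \eqref{eq:6} for all $W$ in that family; $\bY_\infty$ is the set $\Gae(\bX|\ssalg)$ from Theorem~\ref{thr:maximal}, and the existence of $\bY_0$ and $\bY_{\mathbb{S}}$ follows by the identical argument (the only facts used there are that $h(\bX,W)\in\Lpb[1](\R)$ so the conditional gauge is well defined, which holds for all $W\in\Lp[0](\R^d)$ as noted before the theorem, that realisations are closed, and the measurable-version lemma \citep[Lemma~4.3]{lep:mol19}). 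Since $\mathcal{W}_\infty\subseteq\mathcal{W}_0$, any feasible set for the $\mathcal{W}_0$-problem is feasible for the $\mathcal{W}_\infty$-problem, so $\bY_0\subseteq\bY_\infty$; similarly $\mathcal{W}_{\mathbb{S}}\subseteq\mathcal{W}_0$ gives $\bY_{\mathbb{S}}\subseteq\bY_0$. It remains to prove $\bY_\infty\subseteq\bY_{\mathbb{S}}$, which then forces all three to coincide.

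For the reverse direction, the key observation is the scaling invariance already recorded right before Theorem~\ref{thr:maximal}: for $W\in\Lp[0](\R^d;\ssalg)$ and any $\ssalg$-measurable $\gamma>0$, we have $H_{\gamma W}(\gae(h(\bX,\gamma W)|\ssalg))=H_W(\gae(h(\bX,W)|\ssalg))$, using conditional positive homogeneity of the gauge (Proposition~\ref{prop:properties}(ii), in the extended form of Remark~\ref{remark:extension} that allows $\ssalg$-measurable nonnegative $\gamma$) together with positive homogeneity of the support function. Hence a constraint \eqref{eq:6} indexed by a general $W\in\mathcal{W}_0$ is equivalent to the constraint indexed by its normalisation: on the event $\{W\neq 0\}$ replace $W$ by $W/\|W\|\in\Sphere$, and on the event $\{W=0\}$ the constraint reads $h(\bY,0)\le\gae(0|\ssalg)=0$, i.e. $\bY\neq\emptyset$, which is automatic. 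More precisely, given $W\in\mathcal{W}_0$ define $W^\ast:=W/\|W\|$ on $\{W\neq0\}$ and $W^\ast:=w_0$ (any fixed unit vector) on $\{W=0\}$; then $W^\ast\in\mathcal{W}_{\mathbb{S}}$ and $h(\bY_{\mathbb{S}},W)\le\gae(h(\bX,W)|\ssalg)$ holds a.s.\ because on $\{W\neq0\}$ it is $\|W\|$ times the validated inequality for $W^\ast$ and on $\{W=0\}$ both sides are $0$. This shows $\bY_{\mathbb{S}}$ is feasible for the $\mathcal{W}_0$-problem, hence $\bY_{\mathbb{S}}\subseteq\bY_0$; combined with $\bY_0\subseteq\bY_\infty$ from the previous paragraph and, symmetrically, feasibility of $\bY_\infty$'s constraints under the same normalisation trick applied to $W\in\mathcal{W}_\infty$, we obtain the chain $\bY_\infty\subseteq\bY_0\subseteq\bY_{\mathbb{S}}\subseteq\bY_\infty$.

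The only genuinely delicate point is the measurability bookkeeping when splitting on $\{W=0\}$ and gluing $W^\ast$: one must check that $W^\ast$ is again $\ssalg$-measurable (clear, since $\{W=0\}\in\ssalg$ and $\|W\|$ is $\ssalg$-measurable and strictly positive on its complement) and that $\gamma:=\|W\|\in\Lp[0]([0,\infty);\ssalg)$ is exactly the class of multipliers covered by the extended positive-homogeneity statement in Remark~\ref{remark:extension}. There is also the harmless check that $\gae(h(\bX,\gamma W)|\ssalg)=\gamma\,\gae(h(\bX,W)|\ssalg)$ with the convention $0\cdot\infty=0$ on $\{W=0\}$, which matches $\gae(0|\ssalg)=0$ and is consistent with the conditional c.d.f.\ formula in Remark~\ref{remark:extension}. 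I expect the main (and only) obstacle to be writing this normalisation cleanly so that the a.s.\ equivalences of conditional gauges do not cause trouble on the $\P$-null sets; everything else is a direct application of the feasibility/maximality comparison together with scaling invariance.
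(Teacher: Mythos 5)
Your proof is correct and follows essentially the same route as the paper's: both rest on writing $W=\|W\|\,W'$ with $W'$ redefined measurably to be a unit vector on $\{W=0\}$, and then factoring $\|W\|$ out of both sides of \eqref{eq:6} via the conditional positive homogeneity of Proposition~\ref{prop:properties}(ii) in the extended form of Remark~\ref{remark:extension}. The paper compresses this into two sentences; your version only adds the explicit feasibility/maximality bookkeeping for the three families and the harmless handling of the event $\{W=0\}$.
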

\begin{proof}
  If $W\in\Lp[\infty](\R^d;\ssalg)$, it can be represented as the product
  of $\|W\|$ and a random vector $W'$ such that $W'\neq0$ a.s. For
  this, one may need to redefine $W$ in a measurable way outside of
  the event $\{W=0\}$. It remains to notice that it is possible to
  take $\|W\|$ as a factor out of the both sides of \eqref{eq:6} due
  to the homogeneity property, see
  Proposition~\ref{prop:properties}(ii). 
\end{proof}

% The main definition defines the gauge as
% intersection. Alternatively, it is possible to define it as the
% convex hull of the gauge defined for all selections of $\bX$. For
% this, we need however first the definition of the gauge of a
% singleton, say, componentwise.  This is simply a note to read and
% then possibly to discard.

\begin{remark}
  \label{rem:lm}
  The conditional translation equivariance property (see
  Proposition~\ref{prop:properties}(v)) implies that \eqref{eq:6} can
  be equivalently written as
  \begin{displaymath}
    g\big(h(\bX,W)-h(\bY,W)|\ssalg\big)\geq 0\quad \text{a.s.}
  \end{displaymath}
  for all $W\in \Lp[\infty](\R^d;\ssalg)$ and \eqref{eq:5} as
  \begin{equation}
    \label{eq:larger-intersection}
    \bY'=\bigcap_{W\in \Lp[\infty](\R^d;\ssalg)}
    \Big\{x\in\R^d: \gae\big(h(\bX,W)-\langle x,W\rangle|\ssalg\big)\geq 0\;
    \text{a.s.}\Big\}.
  \end{equation}
  The latter expression yields an $\ssalg$-measurable random closed
  convex set even for intersections taken over all $W$, which
  belong to $\Lp[0](\R^d;\salg)$, that is, for those, which are not
  necessarily measurable with respect to $\ssalg$. This approach was
  pursued by \cite{lep:mol19} for the gauges being the expectation,
  essential supremum and essential infimum. Computing the intersection
  in \eqref{eq:larger-intersection} over all (not necessarily
  $\ssalg$-measurable) $W$ may be complicated in case of further gauges.  
  Moreover, it may result in smaller sets, as the following example illustrates.
\end{remark}

% \begin{remark}
%   \label{rem:relaxed}
%   It is possible to amend the construction by allowing only
%   deterministic $W$ from the unit sphere. We call the random closed
%   convex set $\overline\bY$, satisfying
%   \begin{displaymath}
%     h(\overline\bY,w)\leq \gae(h(\bX,w)|\ssalg) \quad\text{a.s.}
%   \end{displaymath}
%   for all $w\in\Sphere$ and so being the largest $\ssalg$-measurable
%   random closed convex subset of 
%   \begin{equation}
%     \label{eq:6det}
%    % h(\overline\bY,W)=
%     \overline\bY':=
%     \bigcap_{w\in\Sphere} H_w\big(\gae(h(\bX,w)|\ssalg)\big).
%   \end{equation}
%   We call $\overline\bY$ the \emph{relaxed conditional set-valued
%     gauge} and denote by $\overline\Gae(\bX|\ssalg)$. While this gauge
%   may fail to capture the distribution of $\bX$ as the following
%   example shows, this definition avoids taking intersection over all
%   random $W$ and so the relaxed set-valued gauge is easier to compute.
% \end{remark}

\begin{example}
  \label{ex:half-space}
  Let $\bX=H_U(0)$ be a half-space with unit outer normal
  $U\in\Lp[0](\Sphere)$ such that the distribution of $U$ is
  non-atomic. Note that in this case $\BX$ is a line $\{tU: t\geq0\}$,
  which is not regular closed, in particular, Lemma~\ref{lemma:subset}
  is not applicable. For any fixed direction
  $w\in\Sphere$, the support function $h(\bX,w)$ is infinite almost
  surely, so that $\gae(h(\bX,w))$ is infinite too. Hence, the
  intersection of $H_w\big(\gae(h(\bX,w)|\ssalg)\big)$ over all
  $w\in\Sphere$ is whole space. If $\ssalg$ is generated by $U$, then
  the right-hand side of \eqref{eq:5} is nontrivial and equal to
  $H_U(0)$.
  
  However, if $\ssalg$ is independent of $U$, then $h(\bX,W)=\infty$
  a.s.\ for all $\ssalg$-measurable $W$. In this case,
  $\gae(h(\bX,W)|\ssalg)=\infty$ a.s.\ and so $\Gae(\bX|\ssalg)$
  becomes the whole space. However, taking the intersection in
  \eqref{eq:larger-intersection} over all $\salg$-measurable directon
  vectors yields the random set
  \begin{align*}
    \bigcap_{V\in \Lp[\infty](\R^d;\salg)}
    &\Big\{x\in\R^d: \gae\big(h(\bX,V)-\langle x,V\rangle|\ssalg\big)\geq 0\;
      \text{a.s.}\Big\}\\
    &\subset \big\{x\in\R^d: \gae\big(h(\bX,U)-\langle x,U\rangle|\ssalg\big)\geq 0\;
     \text{a.s.}\big\}\\
    &=\{x\in\R^d: -\langle x,U\rangle\geq 0\;
      \text{a.s.}\big\}=H_U(0)=\bX,
   \end{align*}
   which is not the whole space.
\end{example}

\begin{theorem}\label{theorem:properties Gae}
  Fix a $p\in\{0\}\cup[1,\infty]$.  The conditional gauge
  $\Gae(\cdot|\ssalg)$ is a map from the family of almost
  surely nonempty ($p$-integrable) random closed convex sets to the
  family of $\ssalg$-measurable random closed convex sets which is
  \begin{enumerate}[(G1)]
  \item law-determined, that is, $\Gae(\bX|\ssalg)=\Gae(\bX'|\ssalg)$
    if $\bX$ and $\bX'$ have the same conditional distribution given
    $\ssalg$;
  \item constant preserving: $\Gae(\bX|\ssalg)=\bX$ a.s.\ if $\bX$ is 
    $\ssalg$-measurable;
  % \item positively homogeneous: $\Gae(\gamma\bX|\ssalg)=\gamma\Gae(\bX|\ssalg)$
  %   a.s.\ for all $\gamma\in\Lp[0]([0,\infty);\ssalg)$;
  \item positively homogeneous:
    $\Gae(\Gamma\bX|\ssalg)=\Gamma\Gae(\bX|\ssalg)$ a.s.\ for all
    invertible $d\times d$ matrices $\Gamma$ with entries from
    $\Lp[0]([0,\infty);\ssalg)$;
  \item monotone:
    $\Gae(\bX|\ssalg)\subseteq \Gae(\bX'|\ssalg)$ a.s.\ if
    $\bX\subseteq\bX'$ a.s.\ for two $p$-integrable random closed sets
    $\bX$ and $\bX'$;
  \item translation equivariant:
    $\Gae(\bX+\bZ|\ssalg)=\Gae(\bX|\ssalg)+\bZ$ for all
    $p$-integrable random closed convex sets $\bZ$ if $\Gae(\bX |\ssalg)$ is
    not empty.
  % \item satisfies the Lipschitz property with respect to the Hausdorff
  %   metric.      
  \end{enumerate}
\end{theorem}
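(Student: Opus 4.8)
The plan is to prove each of the five properties by transferring the corresponding scalar property of the conditional gauge $\gae(\cdot|\ssalg)$ (from Proposition~\ref{prop:properties}) through the defining inequality \eqref{eq:6} and the maximality characterisation of $\Gae(\bX|\ssalg)$ from Theorem~\ref{thr:maximal}. The recurring mechanism is: to show $\Gae(\bX|\ssalg)$ has some property, exhibit an $\ssalg$-measurable random closed convex set $\bY$ with that property satisfying \eqref{eq:6} for $\bX$, and invoke maximality to conclude $\Gae(\bX|\ssalg)\supseteq\bY$; for the reverse inclusion, swap the roles of $\bX$ and the transformed set. I would first record the trivial but useful observation that by the lemma following Theorem~\ref{thr:maximal}, \eqref{eq:6} may be tested on $W\in\Lp[0](\Sphere;\ssalg)$ or on all of $\Lp[0](\R^d;\ssalg)$ interchangeably, and that $h(\bX,W)$ always lies in $\Lpb[1](\R)$ so the conditional gauge is well defined.

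For (G1), if $\bX$ and $\bX'$ have the same conditional distribution given $\ssalg$, then for each $\ssalg$-measurable $W$ the random variables $h(\bX,W)$ and $h(\bX',W)$ have the same conditional distribution given $\ssalg$ (the support function in a fixed $\ssalg$-measurable direction is a measurable functional of the set, applied ``pathwise''), so by the law-determinacy (iii) of Proposition~\ref{prop:properties} the right-hand sides of \eqref{eq:6} agree a.s.; hence $\bX$ and $\bX'$ satisfy exactly the same system of inequalities and produce the same maximal $\bY$. For (G2), if $\bX$ is $\ssalg$-measurable then $\gae(h(\bX,W)|\ssalg)=h(\bX,W)$ a.s.\ by the constant-preserving property (i), so $\bX$ itself is an $\ssalg$-measurable random closed convex set satisfying \eqref{eq:6} with equality, whence $\Gae(\bX|\ssalg)\supseteq\bX$; the opposite inclusion is immediate since any $\bY$ satisfying \eqref{eq:6} has $h(\bY,W)\le h(\bX,W)$ for all $\ssalg$-measurable $W$, and $\bX$ is already $\ssalg$-measurable so Lemma~3.9 (the support-function comparison, applied with $\bX$ now playing the role of the ambient set) gives $\bY\subseteq\bX$; more carefully one uses that $\bX$ being $\ssalg$-measurable and the representation \eqref{eq:repr} for $\bX$ via an $\ssalg$-measurable Castaing representation of $\BX\cap\Sphere$ lets the comparison go through.

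For (G5), translation equivariance, I would use that $h(\bX+\bZ,W)=h(\bX,W)+h(\bZ,W)$ and, crucially, that when $\bZ$ is $\ssalg$-measurable the term $h(\bZ,W)$ is $\ssalg$-measurable, so conditional translation equivariance (v) of Proposition~\ref{prop:properties} gives $\gae(h(\bX+\bZ,W)|\ssalg)=\gae(h(\bX,W)|\ssalg)+h(\bZ,W)$ a.s.\ (on the event where the left side of (v) applies, i.e.\ where $\gae(h(\bX,W)|\ssalg)>-\infty$, which is guaranteed since $\Gae(\bX|\ssalg)$ is assumed nonempty). Thus $\bY$ satisfies \eqref{eq:6} for $\bX+\bZ$ if and only if $\bY-\bZ$ satisfies it for $\bX$ — here I would Minkowski-subtract $\bZ$, noting $h(\bY-\bZ,W)=h(\bY,W)-h(\bZ,W)$ when $\bZ$ is a singleton, and reduce the general case to the singleton one by intersecting over $\ssalg$-measurable selections of $\bZ$ using the Castaing representation and $p$-integrability of $\bZ$. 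This gives $\Gae(\bX+\bZ|\ssalg)=\Gae(\bX|\ssalg)+\bZ$. The statement of (G5) as written only asserts this for $p$-integrable random closed convex $\bZ$ without explicitly saying $\ssalg$-measurable — I would flag that the intended reading (matching the scalar (v) and the rest of the paper) is that $\bZ$ is $\ssalg$-measurable, and carry out the argument under that hypothesis.

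For (G4), monotonicity: if $\bX\subseteq\bX'$ a.s.\ then $h(\bX,W)\le h(\bX',W)$ a.s.\ for every $W$, hence these are ordered in conditional first-order stochastic dominance given $\ssalg$, so (iv) gives $\gae(h(\bX,W)|\ssalg)\le\gae(h(\bX',W)|\ssalg)$ a.s.; therefore any $\bY$ satisfying \eqref{eq:6} for $\bX$ also satisfies it for $\bX'$, so $\Gae(\bX|\ssalg)$ is admissible in the definition of $\Gae(\bX'|\ssalg)$ and maximality yields the inclusion. For (G3), positive homogeneity under an $\ssalg$-measurable invertible matrix $\Gamma$ with nonnegative entries: use $h(\Gamma\bX,W)=h(\bX,\Gamma^\top W)$, so that as $W$ ranges over $\ssalg$-measurable direction vectors so does $\Gamma^\top W$ (here invertibility of $\Gamma$, hence of $\Gamma^\top$, ensures the range is not diminished), and combine with the conditional positive-homogeneity property (ii) applied with the $\ssalg$-measurable positive scalar $\|\Gamma^\top W\|$ after normalising; then the change of variables $x\mapsto\Gamma x$ on the half-spaces turns the intersection defining $\Gae(\bX|\ssalg)$ into the one defining $\Gae(\Gamma\bX|\ssalg)$. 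The main obstacle I anticipate is bookkeeping in (G5) and (G3) around unbounded sets: one must check that the Minkowski operations and the extraction of $\ssalg$-measurable selections interact correctly with possibly infinite support-function values and with the ``take the largest $\ssalg$-measurable subset'' step, and that the event $\{\gae(h(\bX,W)|\ssalg)=-\infty\}$ is handled (it is, via the nonemptiness hypothesis on $\Gae(\bX|\ssalg)$ in (G5) and via superlinearity/the bound $\essinf\le\gae\le\esssup$ elsewhere). Each individual verification is short; assembling them with the correct measurability caveats is the only delicate part.
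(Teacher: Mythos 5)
Your overall strategy --- pushing each scalar property of $\gae(\cdot|\ssalg)$ from Proposition~\ref{prop:properties} through the inequality \eqref{eq:6} and invoking the maximality characterisation from Theorem~\ref{thr:maximal} --- is exactly the paper's, and your treatments of (G1)--(G4) match its one-line arguments; in (G2) you are in fact more careful, since the paper silently uses that an $\ssalg$-measurable $\bX$ is recovered from its support function in $\ssalg$-measurable directions, which is precisely the Castaing-representation argument of Theorem~\ref{thr:lep-mol-bis} that you invoke (your citation ``Lemma~3.9'' should be to that theorem, not to Lemma~\ref{lemma:subset}, which needs $\BY$ regular closed or $\bY$ compact). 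Your observation that $\bZ$ in (G5) must be $\ssalg$-measurable is correct and matches what the paper's proof implicitly uses, since the last equality in its displayed chain needs $h(\bZ,W)$ to be $\ssalg$-measurable.

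The one substantive divergence is the reverse inclusion in (G5). The paper records only the identity $\gae(h(\bX+\bZ,W)|\ssalg)=\gae(h(\bX,W)|\ssalg)+h(\bZ,W)$, from which $\Gae(\bX|\ssalg)+\bZ\subseteq\Gae(\bX+\bZ|\ssalg)$ is immediate by maximality; you additionally try to obtain $\Gae(\bX+\bZ|\ssalg)\subseteq\Gae(\bX|\ssalg)+\bZ$ by ``Minkowski-subtracting $\bZ$'' and reducing to singletons via selections. That step does not go through for non-singleton $\bZ$: the function $W\mapsto h(\bY,W)-h(\bZ,W)$ is not a support function, the Minkowski difference $\bY\ominus\bZ$ satisfies only $(\bY\ominus\bZ)+\bZ\subseteq\bY$, and taking selections of $\bZ$ again delivers only the easy inclusion. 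The difficulty is real when $\bZ$ is unbounded: if $\gae(h(\bX,W)|\ssalg)$ strictly exceeds $h(\Gae(\bX|\ssalg),W)$ in some direction $w_0$ (as happens for quantiles of discrete singletons) and $\bZ=H_{w_0}(0)$, then the largest set subordinate to $\gae(h(\bX,\cdot)|\ssalg)+h(\bZ,\cdot)$ is the half-space $H_{w_0}\big(\gae(h(\bX,w_0)|\ssalg)\big)$, which strictly contains $\Gae(\bX|\ssalg)+\bZ$. So this part of your argument should be dropped or the claim restricted (e.g.\ to singletons or to $\bZ$ for which the support-function bound is attained); note that the paper's own proof likewise only substantiates the ``$\supseteq$'' direction.
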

\begin{proof}
  \begin{enumerate}[(G1)]
  \item Follows from the fact that the conditional
    gauges are law-determined and that $h(\bX,W)$ and $h(\bX',W)$ share
    the same distribution for any $W\in\Lp[\infty](\R^d;\ssalg)$.
  \item If $\bX$ is $\ssalg$-measurable, then for
    $W\in\Lp[\infty](\R^d;\ssalg)$ it holds that
    $\gae(h(\bX,W)|\ssalg)=h(\bX,W)$, so that the set in \eqref{eq:5}
    equals $\bX$ a.s. 
  \item Note that $h(\Gamma\bX,W)=h(\bX,\Gamma^\top W)$ for any
    $W\in\Lp[\infty](\R^d;\ssalg)$. Thus, for
    $\bY = \Gae(\Gamma \bX|\ssalg)$ it holds that
    $h(\bY,W)\leq \gae(h(\Gamma\bX,W)|\ssalg)$ if and only if
    \begin{displaymath}
      h(\Gamma^{-1}\bY,V)=h(\bY,(\Gamma^{-1})^\top V)
      \leq \gae(h(\bX,V)|\ssalg)
    \end{displaymath}
    for $V=\Gamma^\top W$. Since $\Gamma$ is invertible, the sets
    $\Lp[0](\R^d;\ssalg)$ and its image under $\Gamma$ coincide. Thus,
    $\Gamma^{-1}\Gae(\Gamma\bX|\ssalg)=\Gae(\bX|\ssalg)$. 
  \item Follows from the fact that the set constructed by the
    right-hand side of \eqref{eq:5} for $\bX$ is a subset of the one
    constructed by $\bX'$.
    % Thus,
  % \begin{displaymath}
  %   \Lp[0](\bB_{\bX'}\cap\Sphere) \Lp[0](\BX\cap\Sphere).
  % \end{displaymath}
  % \TF{I'm not completely sure about this. It is true that
  % $\Lp[0](\bB_{\bX'}\cap\Sphere)\subset
  % \Lp[0](\BX\cap\Sphere)$. However, the Halfspaces of the right hand
  % side of \eqref{eq:5} are not the same when we use $\bX'$ rather
  % than $\bX$.}
  % \item Follows from the fact that for any $W\in\Lp[\infty](\R^d;\ssalg)$
  %   \begin{displaymath}
  %     \gae(h(\bX+Z,W)|\ssalg)
  %     =\gae(h(\bX,W)+\langle Z,W\rangle|\ssalg)
  %     =\gae(h(\bX,W)|\ssalg)+\langle Z,W\rangle. \qedhere
  %   \end{displaymath}
  \item Follows from the fact that for any $W\in\Lp[\infty](\R^d;\ssalg)$
    \begin{displaymath}
      \gae(h(\bX+\bZ,W)|\ssalg)
      =\gae(h(\bX,W)+h(\bZ,W)|\ssalg)
      =\gae(h(\bX,W)|\ssalg)+h(\bZ,W). 
    \end{displaymath}
  \end{enumerate}
% \item If the Hausdorff distance between $\bX$ and $\bY$ is at most a
%   random variable $W\in\Lp[0]((0,\infty);\ssalg)$, that is, $W$ is the
%   $\ssalg$-conditional essential supremum of the Hausdorff distance
%   between $\bX$ and $\bY$, then the Hausdorff distance between
%   $\Gae(\bX|\ssalg)$ and $\Gae(\bY|\ssalg)$ is at most $W$. This
%   follows from (G5) with $\bZ$ being the Euclidean ball centred at the
%   origin and of radius $W$.
\end{proof}

\begin{remark}[Unconditional set-valued gauge]
  If $\ssalg$ is trivial, then \eqref{eq:6} becomes
  $h(\bY,W)\leq \gae(h(\bX,W))$ for all deterministic $W=w\in\Sphere$
  and so the unconditional gauge is given by
  \begin{equation}
    \label{eq:unconditional}
    \Gae(\bX)=\bigcap_{w\in\Sphere} H_w\big(\gae(h(\bX,w))\big).
  \end{equation}
  Since the right-hand side is deterministic, $\Gae(\bX)$ is a deterministic
  closed convex set. Such a set is called the \emph{Wulff shape}
  associated with the function $\gae(h(\bX,w))$, $w\in\Sphere$, see
  \citet[Section~7.5]{schn2}. If $\gae$ is sublinear and
  $\gae(h(\bX,w))$ is lower semicontinuous in $w$, then we have
  $h(\Gae(\bX),w)=\gae(h(\bX,w))$ for all $w\in\Sphere$. In this case $\bY$ becomes
  the sublinear expectation of $\bX$, studied by \cite{mol:mueh21}.
\end{remark}

% \begin{remark}
%   An alternative (\Red{equivalent ??}) way to define a conditional set-valued
%   gauge is to start with an unconditional gauge function by letting
%   $\bY$ be the deterministic closed convex set $\bY=\Gae(\bX)$ such
%   that $h(\bY,u)\leq \gae(h(\bX,u))$ for all $u\in\Sphere$ and then
%   apply this set-valued function to the conditional distribution of
%   $\bX$ given $\salg$. 
% \end{remark}

% The following result concerns the continuity property of the
% set-valued conditional gauge. Following
% \cite[Section~2.3]{mol:mueh21}, a sequence $\{\bX_n\}$ of random
% closed convex sets is said to converge scalarly to $\bX$ if
% \begin{displaymath}
%   \E h(\bX_n,W)\to \E h(\bX,W) \quad \text{as}\; n\to\infty
% \end{displaymath}
% for all $W\in\Lp[q](\BX)$. ???The actual definition is for sets
% $\bX=\bX+C$ for a cone $C$ --- need to check.?? $\Lp[0]$ case is not
% clear ???

% \begin{theorem}
%   The conditional gauge is lower semicontinuous: \query{TF: What
%   formulation of lower semicontinuity would be
%   plausible? Would it make sense to consider
%   \[
%   \Gae(\bX|\ssalg) \subseteq \liminf_{n\to\infty} \Gae(\bX_n|\ssalg)\quad \text{a.s.}
%   \]
%   where
%   $\liminf_{n\to\infty} \Gae(\bX_n|\ssalg) :=
%   \bigcup_{n\ge1}\bigcap_{m\ge n} \Gae(\bX_m|\ssalg)$ for each
%   sequence $\{\bX_n, n\ge1\}$ converging to $\bX$ in \ldots }
% \end{theorem}

\begin{proposition}\label{prop:superlinear}
  If the conditional gauge $\gae(\cdot|\ssalg)$ is superadditive, then its set-valued
  extension $\Gae(\cdot|\ssalg)$ satisfies 
  \begin{displaymath}
    \Gae(\bX'+\bX''|\ssalg)\supseteq \Gae(\bX'|\ssalg)+\Gae(\bX''|\ssalg).
  \end{displaymath}
  % If the conditional gauge is sublinear, then the conditional
  % set-valued gauge satisfies 
  % \begin{displaymath}
  %   \Gae(\bX'+\bX''|\ssalg)\subseteq \Gae(\bX'|\ssalg)+\Gae(\bX''|\ssalg).
  % \end{displaymath}
\end{proposition}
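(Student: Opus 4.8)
The plan is to work directly with the support-function characterisation of the conditional set-valued gauge. Write $\bY'=\Gae(\bX'|\ssalg)$ and $\bY''=\Gae(\bX''|\ssalg)$, and let $\bY=\bY'+\bY''$. Since $\bY'$ and $\bY''$ are $\ssalg$-measurable random closed convex sets, so is their (closed) Minkowski sum $\bY$, and its support function satisfies $h(\bY,W)=h(\bY',W)+h(\bY'',W)$ for every $W$. The goal is to show that $\bY$ is contained in $\Gae(\bX'+\bX''|\ssalg)$, and by the maximality in Theorem~\ref{thr:maximal} it suffices to verify that $\bY$ satisfies the defining inequality~\eqref{eq:6} for $\bX'+\bX''$, namely $h(\bY,W)\le\gae(h(\bX'+\bX'',W)|\ssalg)$ a.s.\ for all $W\in\Lp[\infty](\R^d;\ssalg)$.

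First I would record that, by definition of $\bY'$ and $\bY''$ via Theorem~\ref{thr:maximal}, we have $h(\bY',W)\le\gae(h(\bX',W)|\ssalg)$ and $h(\bY'',W)\le\gae(h(\bX'',W)|\ssalg)$ almost surely for every $\ssalg$-measurable $W$. Adding these and using $h(\bX'+\bX'',W)=h(\bX',W)+h(\bX'',W)$ together with the superadditivity of the conditional gauge $\gae(\cdot|\ssalg)$ gives
\begin{displaymath}
  h(\bY,W)=h(\bY',W)+h(\bY'',W)\le\gae(h(\bX',W)|\ssalg)+\gae(h(\bX'',W)|\ssalg)\le\gae(h(\bX',W)+h(\bX'',W)|\ssalg)=\gae(h(\bX'+\bX'',W)|\ssalg)
\end{displaymath}
almost surely. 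Thus $\bY$ satisfies~\eqref{eq:6} for $\bX'+\bX''$, and maximality of $\Gae(\bX'+\bX''|\ssalg)$ yields $\bY\subseteq\Gae(\bX'+\bX''|\ssalg)$, which is the claimed inclusion.

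The main point requiring care is the validity of the superadditivity step for the \emph{conditional} gauge and the handling of the value $+\infty$: superadditivity as stated in (g8) requires the right-hand side $\gae(h(\bX',W)|\ssalg)+\gae(h(\bX'',W)|\ssalg)$ to be well defined, i.e.\ not of the form $-\infty+\infty$. Since $\bX'$ and $\bX''$ are $p$-integrable, each $h(\bX^\bullet,W)$ lies in $\Lpb[1](\R)$ (as noted before Theorem~\ref{thr:maximal}), so its conditional gauge cannot take the value $-\infty$ on a set of positive measure when $W\in\Lp[\infty](\R^d;\ssalg)$; hence the sum is always well defined in $(-\infty,\infty]$ and the superadditivity inequality applies verbatim. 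I would also note that if $\Gae(\bX'+\bX''|\ssalg)$ or either summand $\bY',\bY''$ is empty the statement is interpreted with the usual conventions on Minkowski sums of the empty set, and the inclusion is trivial; the substantive content is the nonempty case just treated. No continuity or closedness subtlety beyond the standard fact that the support function of a closed convex set determines it (Lemma~\ref{lemma:subset} and the representation in Theorem~\ref{thr:lep-mol-bis}) is needed, since we only compare support functions.
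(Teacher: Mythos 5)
Your proof is correct and follows essentially the same route as the paper's: apply superadditivity of the conditional gauge to the identity $h(\bX'+\bX'',W)=h(\bX',W)+h(\bX'',W)$ and then invoke the maximality in Theorem~\ref{thr:maximal} to conclude that the Minkowski sum is contained in $\Gae(\bX'+\bX''|\ssalg)$. Your additional remarks on the well-definedness of the sum (ruling out $-\infty+\infty$ via $p$-integrability) and on the empty-set convention are sensible refinements of details the paper leaves implicit.
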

\begin{proof}
  Superadditivity of $\Gae(\cdot|\ssalg)$ follows from the
  fact that for all $W\in \Lp[\infty](\R^d;\ssalg)$
  \begin{align*}
    \gae(h(\bX'+\bX'',W)|\ssalg)&=\gae(h(\bX',W)+h(\bX'',W)|\ssalg)\\
    &\geq \gae(h(\bX',W)|\ssalg)+\gae(h(\bX'',W)|\ssalg). 
  \end{align*}
  Thus, $\Gae(\bX'+\bX''|\ssalg)$ contains the sum of the largest
  random closed convex sets whose support functions are dominated by
  $\gae(h(\bX',W)|\ssalg)$ and $\gae(h(\bX'',W)|\ssalg)$, which are
  $\Gae(\bX'|\ssalg)$ and $\Gae(\bX''|\ssalg)$, respectively. 
\end{proof}

\begin{remark}[Set-valued utility]
  Proposition \ref{prop:superlinear} implies that if
  $\gae(\cdot|\ssalg)$ is superadditive, $\Gae(\cdot|\ssalg)$ becomes
  a conditional set-valued utility functions, which can be used as an
  acceptance criterion for set-valued portfolios. Namely, $\bX$ is
  said to be conditionally acceptable if $0\in\Gae(\bX|\ssalg)$. Then,
  $0\in\Gae(\bX'|\ssalg)$ and $0\in\Gae(\bX''|\ssalg)$ for two random
  closed convex sets $\bX'$ and $\bX''$ imply that
  $0\in\Gae(\bX'+\bX''|\ssalg)$, meaning that $\bX'+\bX''$ is also
  conditionally acceptable.
  % \Red{It does not look possible to
  % establish an similar property for subadditive gauges.}
\end{remark}

The following examples work for any choice of the gauge function
$\gae$, which admits a conditional version.  

\begin{example}
  Assume that $\bX:=\xi \bZ$ and $\xi\geq0$ is independent of
  $\sigma(\bZ)$. Then
  \begin{displaymath}
    \gae(h(\bX,W)|\bZ)=\gae(h(\bZ,W)\xi|\bZ)
    =\gae(\xi|\bZ)h(\bZ,W).
  \end{displaymath}
  Thus, $\Gae(\bX|\bZ)=\gae(\xi|\bZ)\bZ$.
\end{example}

\begin{example}
  Let $\bX:=(-\infty,V_1]\times(-\infty,V_2]$ be the quadrant in
  $\R^2$ with its upper right corner at $(V_1,V_2)$. Let $\ssalg$ be
  generated by $V_2$. If $w=(w_1,w_2)\in\R_+^2$, then
  \begin{displaymath}
    h(\bX,w)=w_1V_1+w_2V_2,
  \end{displaymath}
  and the support function is infinite for all other $w\in \R^2$.  For
  any gauge $\gae$, since $W\in\Lp[\infty](\R_+^2;\ssalg)$,
  \begin{displaymath}
    \gae(h(\bX,W)|V_2)=W_1\gae(V_1|V_2)+W_2V_2.
  \end{displaymath}
  Thus, 
  $\Gae(\bX|V_2)=\big(-\infty,\gae(V_1|V_2)\big]\times\big(-\infty,V_2\big]$.
\end{example}

\begin{example}
  Let $\bX:=[0,V_1]\times[0,V_2]$ be the rectangle in $\R^d$ with its
  upper right corner at $(V_1,V_2)$. Let $\ssalg$ be generated by
  $V_2$. If $W=(W_1,W_2)$, then
  \begin{displaymath}
    h(\bX,W)=
    \begin{cases}
      0, & W_1\leq 0, W_2\leq 0,\\
      W_1V_1+W_2V_2 , & W_1>0, W_2>0,\\
      W_1V_1 & W_1>0, W_2\leq 0,\\
      W_2V_2 & W_1\leq 0, W_2>0.
    \end{cases} 
  \end{displaymath}
  Since $W$ is assumed to be $\ssalg$-measurable,
  \begin{displaymath}
    \gae(h(\bX,W)|\ssalg)=
    \begin{cases}
      0, & W_1\leq 0, W_2\leq 0,\\
      W_1 \gae(V_1|V_2)+W_2V_2 , & W_1>0, W_2>0,\\
      W_1\gae(V_1) & W_1>0, W_2\leq 0,\\
      W_2V_2 & W_1\leq 0, W_2>0.
    \end{cases} 
  \end{displaymath}
  Thus, $\Gae(\bX|V_2)$ is the rectangle
  $\big[0,\gae(V_1|V_2)\big]\times\big[0,V_2\big]$.
\end{example}

\section{Special cases of set-valued gauges}
\label{sec:special-cases-gauges}

While we write $\Gae(\bX|\ssalg)$ for a generic conditional set-valued
gauge, we use special notation for the most important gauge functions
to denote their set-valued variants. For example,
$\ve_\alpha(\bX|\ssalg)$ denotes the set-valued gauge constructed from
the right-average quantile $\ve_\alpha$.

\subsection{Generalised conditional expectation}
\label{sec:gener-expect}

Let $\gae(\cdot|\ssalg)$ be the generalised conditional expectation,
which is defined on $\Lpb[1](\R)$. Then $\Gae(\bX|\ssalg)$ is the
generalised conditional expectation of $\bX$ given $\ssalg$, see
\citet[Definition~2.1.79]{mo1}. Indeed, the generalised conditional
expectation $\E(\bX|\ssalg)$ satisfies
\begin{equation}
  \label{eq:2}
  h\big(\E(\bX|\ssalg),W\big)=\E\big(h(\bX,W)|\ssalg\big)\quad\text{a.s.}
\end{equation}
for all $\ssalg$-measurable $W$.  For deterministic $W$, this fact is
well known (see, e.g., \citet[Theorem~2.1.72]{mo1}), for random $W$ it is
proved in \citet[Lemma~6.7]{lep:mol19}. Thus, the generalised
conditional expectation is the largest $\ssalg$-measurable random
closed set $\bY$ which satisfies $h(\bY,W)\leq\E(h(\bX,W)|\ssalg)$
a.s.\ for all $W\in \Lp[\infty](\R^d;\ssalg)$, meaning that
\eqref{eq:6} is satisfied with equality.  If the left-hand side of
\eqref{eq:2} is finite, then necessarily $W\in\BX$. Hence,
$\bB_{\E(\bX|\ssalg)}\subseteq\BX$. The inverse inclusion does not
always hold.

If we consider gauges given by the maximum and minimum extensions of
the expectation defined at \eqref{eq:max-ext} and \eqref{eq:min-ext},
then we obtain the set-valued gauges given by
$\E\big(\bX_1\cup\cdots\cup\bX_m|\ssalg\big)$ and
$\E\big(\bX_1\cap\cdots\cap\bX_m|\ssalg\big)$, respectively, where
$\bX_1,\dots,\bX_m$ are i.i.d.\ copies of $\bX$. The 
intersection-based gauge is nonempty only if
$\bX_1\cap\cdots\cap\bX_m\neq\emptyset$ with probability one.

\subsection{Conditional quantiles}
\label{sec:cond-quant}

Assume now that $\gae=\quantile^-$ is the lower
$\alpha$-quantile. Then $\quantile^-(\bX|\ssalg)$ is the largest closed
convex set such that its support function is dominated by
$\quantile^-(h(\bX,W))$ for all $W\in\Lp[0](\Sphere;\ssalg)$.

The set-valued extension $\essinf(\bX|\ssalg)$ of the conditional
essential infimum is the largest $\ssalg$-measurable random closed
convex set $\bY$ such that
\begin{displaymath}
  h(\bY,W)\leq \essinf\big(h(\bX,W)|\ssalg\big)\quad \text{a.s.}
\end{displaymath}
for all $W\in \Lp[\infty](\R^d;\ssalg)$. A similar concept is the
\emph{conditional core} $\core(\bX|\ssalg)$ of $\bX$, which is defined
by \cite{lep:mol19} as the largest $\ssalg$-measurable random closed
convex subset of $\bX$, that is, $h(\core(\bX|\ssalg),V)\leq h(\bX,V)$
for all $\salg$-measurable $V$.  By Theorem~4.12 from \cite{lep:mol19},
\begin{displaymath}
  \core(\bX|\ssalg)=\bigcap_{V\in \Lp[\infty](\R^d)}
  H_V\big(\essinf(h(\bX,V)|\ssalg)\big),
\end{displaymath}
where the intersection is taken over all $\salg$-measurable $V$, not
necessarily those which are $\ssalg$-measurable, cf.\ 
\eqref{eq:larger-intersection} where the intersection is taken over
all $W\in\Lp[\infty](\R^d;\ssalg)$. Thus,
\begin{displaymath}
%  \label{eq:core-sub}
  \core(\bX|\ssalg)\subseteq \essinf(\bX|\ssalg),
\end{displaymath}
and the inclusion may be strict as Example~\ref{ex:half-space} shows.
This inclusion shows that $\essinf(\bX|\ssalg)$ is not empty if the
conditional core is not empty, which is the case if $\bX$ admits an
$\ssalg$-measurable selection.

In the unconditional case, $\core(\bX)$ is the set of fixed points of
$\bX$. Still, $\essinf(\bX)$ may be different: for example, if $\bX$
is the random half-space from Example~\ref{ex:half-space}, then
$\essinf h(\bX,u)=\infty$ for all deterministic $u$ and so
$\essinf(\bX)=\R^d$, while $\core(\bX)=\{0\}$.  On the other hand, by
Lemma~\ref{lemma:subset}, we have $\core(\bX)=\essinf(\bX)$ if $\BX$
is regular closed. Indeed, then $\core(\bX)$ is the largest
determinist set such that $h(\core(\bX),w)\leq h(\bX,w)$ a.s.\ for all
$w\in\Sphere$, which is the same as
$h(\core(\bX),w)\leq \essinf h(\bX,w)$. The latter inequality
identifies $\essinf(\bX)$.

The \emph{conditional convex hull} $\chull(\bX|\ssalg)$ of $\bX$, is
the smallest (in the sense of inclusion) $\ssalg$-measurable random
closed convex set which a.s.\ contains $\bX$, see
\citet[Definition~5.1]{lep:mol19}.  It follows from
\citet[Theorem~5.2]{lep:mol19} that the support function of
$\chull(\bX|\ssalg)$ equals $\esssup h(\bX,W)$ for all
$W\in\Lp[0](\Sphere;\ssalg)$. If $\gae$ is the essential supremum,
then the right-hand side of \eqref{eq:5} equals $\chull(\bX|\ssalg)$,
meaning that
\begin{equation}
  \label{eq:chull-cond}
  \chull(\bX|\ssalg)=\esssup(\bX|\ssalg).
\end{equation}
This equality is due to the fact that the essential supremum is
sublinear.

For a general gauge, we always have
\begin{displaymath}
  \essinf(\bX|\ssalg)\subseteq\Gae(\bX|\ssalg)
  \subseteq \esssup(\bX|\ssalg).
\end{displaymath}

\subsection{Average quantiles}
\label{sec:integrated-quantiles}

The conditional right-average quantile $\ve_\alpha(\cdot|\ssalg)$ is
subadditive. Then $\ve_\alpha(h(\bX,W)|\ssalg)$ is a sublinear function of
$W\in \Lp[\infty](\R^d;\ssalg)$.  

% Since we do not assume the lower semicontinuity property, it is only
% possible to say that the support function of $\bY=\Gae(\bX|\ssalg)$ is
% dominated by $\ve_\alpha(h(\bX,W)|\ssalg)$.
% and so is the support function of a random
% convex set $\overline\bY$ being the relaxed conditional set-valued
% gauge, see \eqref{eq:6det}. In particular,
% \begin{displaymath}
%   h(\overline\bY,w)=\ve(h(\bX,w)|\ssalg),\quad w\in\Sphere.
% \end{displaymath}
%\query{\Blue{We need lsc for above to hold!}}

The conditional left-average quantile $\ue_\alpha$ is superlinear. If
$\bY=\ue_\alpha(\bX|\ssalg)$, then its
width in direction $w\in\Sphere$ is bounded from above by 
\begin{align*}
  h(\bY,w)+h(\bY,-w)
  &\leq \ue_\alpha(h(\bX,w)|\ssalg)+\ue_\alpha(h(\bX,-w)|\ssalg)\\
  &\leq \ue_\alpha(h(\bX,w)+h(\bX,-w)|\ssalg).
\end{align*}
Thus, the width of $\bY$ is at most the conditional
left-average quantile of the width of $\bX$. 

\section{Random singletons and conditional depth-trimmed regions}
\label{sec:random-singletons}

%\query{Discuss: Essential supremum of $(X,1-X)$. Does not work componentwisely!}

Let $X\in \Lpb(\R^d)$.
By definition, $\bY=\Gae(\{X\}|\ssalg)$ is the largest
$\ssalg$-measurable random closed convex set such that
\begin{displaymath}
  h(\bY,W)\leq \gae\big(\langle X,W\rangle|\ssalg\big)
\end{displaymath}
for all $W\in\Lp[\infty](\R^d;\ssalg)$.

If $\gae$ is the expectation and $X\in\Lp[1](\R^d)$, then
$\Gae(\{X\}|\ssalg)=\big\{\E(X|\ssalg)\big\}$.

\begin{example}[Unconditional setting with sublinear gauges]
  Fix a $p\in[1,\infty]$ and assume that $\gae:\Lpb[p](\R)\to\Rb$ is
  subadditive and lower semicontinuous. If $w_n\to w$, then $\langle
  X,w_n\rangle$ converges to $\langle X,w\rangle$ in
  $\sigma(\Lp[p],\Lp[q])$, so that the function
  $\gae(\langle X,w\rangle)$ is lower semicontinuous. Since $\gae$ is
  subadditive, this function is sublinear in $w$. Indeed, it is clearly
  homogeneous and
  \begin{displaymath}
    \gae\big(\langle X,w+v\rangle\big)
    =\gae\big(\langle X,w\rangle+\langle X,v\rangle\big)
    \leq \gae\big(\langle X,w\rangle\big)
    +\gae\big(\langle X,v\rangle\big).
  \end{displaymath}
  Thus, there exists a convex body $\Gae(\{X\})$ such that
  $h(\Gae(\{X\}),w)=\gae(\langle X,w\rangle)$ for all $w\in\R^d$, see
  \cite{schn2}. This way to associate multivariate distributions with
  convex bodies was suggested by \cite{mol:tur21}, where examples and
  properties of this construction can be found.
\end{example}

For a random vector $X=(X_1,\dots,X_d)\in\Lpb(\R^d)$, we denote
\begin{displaymath}
  \gae(X|\ssalg)=\big(\gae(X_1|\ssalg),\dots, \gae(X_d|\ssalg)\big).
\end{displaymath}

\begin{proposition}
  \label{prop:singelton}
  Let $\bX=\{X\}$ be a singleton with
  $X=(X_1,\dots,X_d)\in\Lpb(\R^d)$. Then the following holds.
  \begin{enumerate}[(i)]
  \item If the gauge $\gae$ is superadditive, then
    $\Gae(\{X\}|\ssalg)\subseteq \{\gae(X|\ssalg)\}$ almost surely. 
  \item If the gauge $\gae$ is subadditive, then
    $\Gae(\{X\}|\ssalg)\subseteq\gae(X|\ssalg)+\R_-^d$ a.s.
  \end{enumerate}
\end{proposition}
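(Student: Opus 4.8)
The plan is to exploit that for a singleton $\bX=\{X\}$ the support function is just the linear form $h(\{X\},W)=\langle X,W\rangle$, so that by Theorem~\ref{thr:maximal} the set $\bY:=\Gae(\{X\}|\ssalg)$ is characterised as the largest $\ssalg$-measurable random closed convex set with $h(\bY,W)\leq\gae(\langle X,W\rangle|\ssalg)$ a.s.\ for every $W\in\Lp[\infty](\R^d;\ssalg)$. The idea is then to feed into this inequality only the $2d$ deterministic directions $\pm e_1,\dots,\pm e_d$, where $e_1,\dots,e_d$ is the standard basis of $\R^d$; these are bounded and trivially $\ssalg$-measurable, and $\langle X,\pm e_i\rangle=\pm X_i$. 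Since $\{X\}$ is a $p$-integrable random closed convex set (as required for $\Gae(\{X\}|\ssalg)$ to be defined), $X$ is a.s.\ $\R^d$-valued, so $\pm X_i\in\Lpb(\R)$ and all conditional gauges below are well defined.

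For part (ii), one takes $W=e_i$ to get $h(\bY,e_i)\leq\gae(X_i|\ssalg)$ a.s.\ for each $i$. On the event $\{\bY\neq\emptyset\}$, any $y\in\bY$ satisfies $y_i=\langle y,e_i\rangle\leq h(\bY,e_i)\leq\gae(X_i|\ssalg)$ for all $i$, that is, $y\in\gae(X|\ssalg)+\R_-^d$; on the complementary event the inclusion is vacuous. Hence $\bY\subseteq\gae(X|\ssalg)+\R_-^d$ a.s. Only the coordinate directions enter, so subadditivity of $\gae$ is not actually needed for this inclusion.

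For part (i), one additionally takes $W=-e_i$, obtaining $h(\bY,-e_i)\leq\gae(-X_i|\ssalg)$ a.s. Since $\gae$ is superadditive, the conditional gauge $\gae(\cdot|\ssalg)$ is superadditive too, which together with $\gae(0|\ssalg)=0$ (Proposition~\ref{prop:properties}(i)) gives
\begin{equation*}
  0=\gae\big(X_i+(-X_i)\,\big|\,\ssalg\big)\geq \gae(X_i|\ssalg)+\gae(-X_i|\ssalg)
\end{equation*}
and hence $\gae(-X_i|\ssalg)\leq-\gae(X_i|\ssalg)$ whenever the right-hand side above is well defined. Combining with (ii) yields $h(\bY,e_i)\leq\gae(X_i|\ssalg)$ and $h(\bY,-e_i)\leq-\gae(X_i|\ssalg)$, so any $y$ in a nonempty realisation of $\bY$ obeys both $y_i\leq\gae(X_i|\ssalg)$ and $-y_i\leq-\gae(X_i|\ssalg)$, i.e.\ $y_i=\gae(X_i|\ssalg)$; as this holds for every $i$ we get $y=\gae(X|\ssalg)$, whence $\bY\subseteq\{\gae(X|\ssalg)\}$ a.s.

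The only step requiring care is the bookkeeping when some $\gae(X_i|\ssalg)$ takes the value $\pm\infty$ on an $\ssalg$-measurable event, since then the sum appearing in the superadditivity inequality need not be well defined and $\gae(X|\ssalg)$ is not literally a point. This is dealt with by the observation that such events force $\bY$ to be empty: on $\{\gae(X_i|\ssalg)=-\infty\}$ one has $h(\bY,e_i)=-\infty$, hence $\bY=\emptyset$ there, while on $\{\gae(X_i|\ssalg)=+\infty\}$ superadditivity forces $\gae(-X_i|\ssalg)=-\infty$ (otherwise $\gae(X_i|\ssalg)+\gae(-X_i|\ssalg)$ would be a well-defined quantity strictly exceeding $0$), hence $h(\bY,-e_i)=-\infty$ and again $\bY=\emptyset$ there; on all such events both asserted inclusions are vacuous. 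Everything else is routine: measurability of $\bY$ and the conventions $H_w(\pm\infty)$ are already supplied by Theorem~\ref{thr:maximal} and the preliminaries, and the reduction of a convex set squeezed between $h(\cdot,e_i)\leq a_i$ and $h(\cdot,-e_i)\leq-a_i$ ($i=1,\dots,d$) to the single point $(a_1,\dots,a_d)$ is elementary.
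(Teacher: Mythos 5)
Your proof is correct, and it rests on the same mechanism as the paper's own argument --- testing the defining inequality $h(\bY,W)\leq\gae(\langle X,W\rangle|\ssalg)$ from Theorem~\ref{thr:maximal} against suitable directions $W$ --- but with a genuinely more economical choice of test directions. The paper proves (i) by bounding the width $h(\bY,W)+h(\bY,-W)\leq \gae(\langle X,W\rangle|\ssalg)+\gae(\langle X,-W\rangle|\ssalg)\leq\gae(0|\ssalg)=0$ for \emph{every} $W\in\Lp[\infty](\R^d;\ssalg)$, concludes that $\bY$ is a.s.\ a singleton or empty, and then identifies the coordinates; it proves (ii) by invoking subadditivity to get $\gae(\langle X,W\rangle|\ssalg)\leq\langle\gae(X|\ssalg),W\rangle$ for all $W$ with nonnegative components. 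You instead test only the $2d$ deterministic directions $\pm e_1,\dots,\pm e_d$, which suffices because both target sets $\{\gae(X|\ssalg)\}$ and $\gae(X|\ssalg)+\R_-^d$ are intersections of coordinate half-spaces. This buys two things: your observation that subadditivity plays no role in (ii) --- only the trivial chain $y_i\leq h(\bY,e_i)\leq\gae(X_i|\ssalg)$ enters --- is correct and actually sharpens the statement; and your explicit treatment of the events where $\gae(X_i|\ssalg)=\pm\infty$ (each forcing $\bY=\emptyset$ via $h(\bY,e_i)=-\infty$ or, through superadditivity, $h(\bY,-e_i)=-\infty$) fills in bookkeeping the paper leaves implicit. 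What the paper's route buys in exchange is coordinate-free structural information: the width bound shows $\bY$ is at most a singleton in every direction, and the subadditive estimate controls $h(\bY,W)$ for all nonnegative $W$, not merely the asserted inclusion.
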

\begin{proof}
  (i) If the conditional gauge is superadditive, then the width of
  $\bY=\Gae(\{X\}|\ssalg)$ in direction $W$ is given by
  \begin{align*}
    h(\bY,W)+h(\bY,-W)&\leq \gae(\langle X,W\rangle|\ssalg)
    +\gae(\langle X,-W\rangle|\ssalg)\\
    &\leq \gae(0|\ssalg)=0
  \end{align*}
  for all $W\in\Lp[\infty](\R^d;\ssalg)$. Thus, $\Gae(\{X\}|\ssalg)$
  is a.s.\ a singleton $\{Y\}$ or it is empty.  Assuming the former,
  the components $Y_i$ of $Y= (Y_1,\ldots,Y_d)$ are given by
  $\gae(X_i|\ssalg)$.
%   Assuming the former,
%   \begin{displaymath}
%     \langle Y,W\rangle \gae(\langle X,W\rangle|\ssalg)
%   \end{displaymath}
%   for all $W\in\Lp[\infty](\R^d;\ssalg)$. If the gauge is translation
%   invariant, then
%   \begin{displaymath}
%     \gae(\langle X-Y,W\rangle|\ssalg)\geq 0. 
%   \end{displaymath}
%  by \cite[Corollary~3.6]{lep:mol19}.

  (ii) If the conditional gauge is subadditive, then
  \begin{displaymath}
    \gae(\langle X,W\rangle|\ssalg)\leq \langle \gae(X|\ssalg),W\rangle.
  \end{displaymath}
  for all $W\in\Lp[\infty](\R^d_+;\ssalg)$. Note here that the
  non-negativity of the components of $W$ is essential to apply the
  subadditivity property.  Then
  \begin{displaymath}
    h(\bY,W)\leq \langle \gae(X|\ssalg),W\rangle
  \end{displaymath}
  for all $W\in\Lp[\infty](\R^d_+;\ssalg)$, so that $\bY\subseteq
  \gae(X|W)+\R_-^d$. 
\end{proof}

The proof for the superadditive case actually implies that
$\Gae(\{X\}|\ssalg)$ is empty if
\begin{displaymath}
  \gae(\langle X,W\rangle|\ssalg)
  +\gae(\langle X,-W\rangle|\ssalg)<0 
\end{displaymath}
for at least one $W\in\Lp[\infty](\R^d;\ssalg)$. 

\begin{example}[Conditioning on one component]
  Let $X=(X_1,X_2)$ be a random vector in $\R^2$, and let $\ssalg$ be
  generated by $X_2$. Then
  \begin{align*}
    \bigcap_{W\in \Lp[\infty](\R^2;\ssalg)}
    H_W(\gae(\langle X,W\rangle|X_2))
    &=\bigcap_{W\in \Lp[\infty](\R^2;\ssalg)}
    H_W(\gae(X_1W_1|X_2)+\langle (0,X_2),W\rangle)\\
    &=(0,X_2)+\bigcap_{W\in \Lp[\infty](\R^2;\ssalg)}
    H_W(\gae(X_1W_1|X_2)). 
  \end{align*}
  If $W_1\geq 0$ a.s., it is possible to use the homogeneity property
  to obtain
  \begin{align*}
    \Gae(\{X\}|\ssalg)
    &\subset
      (0,X_2)+\bigcap_{W\in \Lp[\infty](\R_+\times\R;\ssalg)}
      H_W(\langle (\gae(X_1|X_2),0),W\rangle)\\
    &=(\gae(X_1|X_2),X_2)+(-\infty,0]\times\{0\}. 
  \end{align*}
  Note here that the polar to $\R_+\times\R$ is
  $(-\infty,0]\times\{0\}$. If $W_1\leq 0$ a.s., then
  \begin{displaymath}
    \gae(X_1W_1|X_2)=(-W_1)\gae(-X_1|X_2).
  \end{displaymath}
  Thus,
  \begin{displaymath}
    \Gae(\{X\}|X_2)\subseteq
    [-\gae(-X_1|X_2),\gae(X_1|X_2)]\times\{X_2\}.
  \end{displaymath}
  Note that the first component is bounded between the conditional
  scalar gauge and its dual.
\end{example}

Consider now several particular gauges applied to random singletons. 

\begin{example}[Conditional half-space depth]
  Consider the gauge given by a quantile $\gae=\quantile^-$. It
  is defined for all $X\in\Lpb[0](\R^d)$. In the unconditional
  setting, $\Gae(\{X\})$ is the Tukey (or half-space) depth-trimmed
  region at level $1-\alpha$, see \cite{tuk75} and
  \cite{nag:sch:wer19}. If $X$ is uniformly distributed in a convex
  set $K$, the set $\Gae(\{X\})$ is called the floating body of $K$,
  see \cite{nag:sch:wer19}. It is known from \cite{bob10} that, if the
  distribution of $X$ is log-concave and $\alpha\in(1/2,1)$, then
  \begin{displaymath}
    h(\Gae(\{X\}),u)=\quantile^-(\langle X,u\rangle),
    \qquad u\in \Sphere. 
  \end{displaymath}
  If $\alpha=1$, then $\gae$ is the essential supremum,  and
  $\esssup(\{X\})$ is the convex hull of the support of $X$.

  The conditional variant of the half-space depth may be used in a
  multiple output regression setting to introduce a notion of depth
  (for the responses) conditioned on the value of the regressors, as
  indicated in \cite{hallin10} and \cite{MR2420242}.
\end{example}

\begin{example}
  Assume that $X$ has a spherically symmetric distribution, that is,
  the distribution of $X$ is invariant under orthogonal
  transformations, see \citet[Section~2.1]{fan:kot:ng90}. It is known
  (see Theorem~2.4 ibid) that this holds if and only if for all
  $w\in\Sphere$ the projection $\langle X,w\rangle$ has the same distribution as
  $\|w\|X_1$, where $X_1$ is the first component of $X$. If
  $\gae(X_1)\ge 0$, then
  \begin{align*}
    \Gae(\{X\})
    &= \bigcap_{w\in\Sphere} \big\{x\in\R^d :
      \langle x,w\rangle \le \gae(\langle X,w\rangle)\big\}\\
    &= \bigcap_{w\in\Sphere} \big\{x\in\R^d :
      \langle x,w\rangle \le \|w\|\gae(X_1)\big\}
    = \gae(X_1) B\,,
  \end{align*}
  where $B$ is the unit Euclidean ball. If $Y=\mu+\Gamma X$ for a deterministic location $\mu\in\R^d$ an
  invertible scale matrix $\Gamma\in\R^{d\times d}$, then (G2) and (G3)
  yield that $\Gae(\{Y\})= \mu+\gae(X_1) \Gamma B$, which is a
  translated ellipsoid. For instance, if $Y$ follows the centred
  normal distribution with covariance matrix $\Sigma$, then
  $\Gamma=\Sigma^{1/2}$ and $X_1$ is the standard
  normal. Conveniently, a closed form solution for the gauge of a
  one-dimensional standard normal is known for many important
  cases. E.g., for the (upper and lower) quantile
  $\quantile^-(X_1) = \Phi^{-1}(\alpha)$, $\alpha\in(0,1)$, and for
  average quantiles
  \begin{equation}\label{eq:normal distribution}
    \ve_\alpha(X_1) = (1-\alpha)^{-1}\varphi(\Phi^{-1}(\alpha)),
    \quad\quad
    \ue_\alpha(X_1) = -\alpha^{-1}\varphi(\Phi^{-1}(\alpha)),
  \end{equation}
  where $\Phi^{-1}$ is the quantile function of a standard normal and
  $\varphi$ its density function. 

  Finally, for the conditional setup, assume that $X$ given $\ssalg$
  follows a normal distribution with conditional mean vector $\mu$ and invertible
  conditional covariance matrix $\Sigma$.  Such a situation
  arises, e.g., when $X$ is jointly normal and $\ssalg$ is generated
  by a subvector of $X$. Another relevant example is a mean--normal
  variance mixture such as a GARCH-process with Gaussian innovations,
  which is a relevant model class in quantitative risk management; see
  \citet[Chapter 6.2]{MFE15} and references therein.  Then, the above
  arguments work similarly, and we obtain
  \[
    \Gae(\{X\}|\ssalg) =\mu+ \gae(X_1)\Sigma^{1/2}B\,.
  \]
  That is, the conditional gauge is a random ellipsoid obtained by
  multiplying the Euclidean ball with $\gae(X_1)$ and the random
  matrix $\Sigma^{1/2}$ and translating by the random vector $\mu$.
\end{example}

\begin{example}[Conditional zonoids]
  If $\gae=\ve_\alpha$ is the right-average quantile, then
  $\Gae(\{X\})$ is the zonoid-trimmed region of $X\in\Lp[1](\R^d)$,
  see \cite{kos:mos97} and \cite{mos02}. In this case, the support function of
  $\Gae(\{X\})$ is equal to $\ve_\alpha(\langle X,u\rangle)$ for all
  $u\in\Sphere$. Its conditional variant yields a random convex
  body, which may be understood as a conditional zonoid of a
  multivariate distribution.  An application of this construction to the
  random vector $(1,X)$ in dimension $(d+1)$ yields lift zonoids (see
  \cite{mos02}) and their conditional variants.
\end{example}

\begin{example}[Conditional expectation and its maximum extension]
  If $\gae$ is the maximum extension of the conditional expectation,
  then $\Gae(\{X\}|\ssalg)$ is the conditional expectation of the
  random polytope $P$ obtained as the convex hull of the finite set
  $\{X_1,\dots,X_m\}$ built by i.i.d.\ copies of $X$. This
  expectation taken with respect a filtration appears in
  \cite{arar:ma23} as an example of a set-valued martingale. 
\end{example}

Let $\gae=\ve^{p,a}$ be defined at \eqref{eq:norm-gauge}.
% This is a sublinear gauge, and so
% $\ve^{p,a}(\langle X,w\rangle|\ssalg)$ is a sublinear function of
% deterministic $w$.
Assume that $\E(X|\ssalg)=0$ a.s. for $X\in\Lp[1](\R^d)$.
Then the corresponding conditional depth-trimmed region is the largest
convex set such that 
\begin{displaymath}
  h(\Gae(\{X\}|\ssalg),W)= \Big(\E\big[|\langle
  X,W\rangle|^p_+\big|\ssalg]\Big)^{1/p}
\end{displaymath}
for all $W\in\Lp[0](\Sphere;\ssalg)$. 

The depth-trimmed regions based on considering expectiles have been
studied by \cite{cascos21:_expec}. Our construction provides their
conditional variant, which also have been mentioned in the cited
paper in view of application to the regression setting.

%\query{if we want also: random point plus a compact set}

\section{Translations of cones}
\label{sec:cones-transl-rand}

\subsection{The case of a deterministic cone}
\label{sec:case-determ-cone}

Let $\bX=X+\cone$, where $X\in\Lp(\R^d)$ and $\cone$ is a
deterministic convex cone in $\R^d$ which is different from the origin
and the whole space.  In this case, $h(\bX,w)=\langle X,w\rangle$ if
$w$ belongs to the polar cone $\cone^o$ and otherwise
$h(\bX,w)=\infty$ a.s. Thus, $\BX=\cone^o$. 

In the unconditional setting, $\Gae(X+\cone)$ is the largest closed convex
set satisfying
\begin{displaymath}
  h(\Gae(X+\cone),w)\leq \gae(\langle X,w\rangle),\quad w\in \cone^o. 
\end{displaymath}
Then 
\begin{displaymath}
  \Gae(X+\cone)=\bigcap_{w\in\Sphere} H_w(\gae(X+\cone,w)))
  =\bigcap_{w\in \cone^o} H_w(\gae(\langle X,w\rangle)).
\end{displaymath}
If $\gae=\quantile^-$ is the quantile, we recover the construction
from \cite{ham:kos18}, namely, $\quantile^-(X+\cone)$ is the
lower $\cone$-quantile function $Q^{-}_{X,\cone}(\alpha)$ of $X$
introduced in \cite[Definition~4]{ham:kos18}. If $\gae$ is the
expectile, we recover the downward cone expectile from Definition~2.1
of \cite{linh:hamel23}. If $\gae$ is the dual to the expectile, we
recover the upward cone expectile from the same reference. Our
construction works for a number of further gauge functions and also in
the conditional setting. 

% Since $\Gae(X+\cone)=\Gae(X+2\cone)=\Gae(X+\cone)+\cone$,
% the barrier cone to $\Gae(X+\cone)$ is a subset of $\cone^o$.
The following result immediately follows from (G5).

\begin{proposition}
  \label{prop:plus-cone}
  Let $\gae$ be a sublinear expectation defined on $\Lpb[p](\R)$. If
  $X\in\Lp(\R^d)$ with $\Gae(\{X\}|\ssalg)$ almost surely nonempty,
  and $\cone$ is a deterministic cone, then
  \begin{displaymath}
    \Gae(X+\cone|\ssalg)=\Gae(\{X\}|\ssalg)+\cone. 
  \end{displaymath}
\end{proposition}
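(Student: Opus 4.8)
The plan is to obtain this as a direct specialisation of the translation equivariance property (G5) from Theorem~\ref{theorem:properties Gae}, reading $X+\cone$ as the Minkowski sum $\{X\}+\cone$ of the random singleton $\{X\}$ and the deterministic closed convex set $\cone$. So I would set $\bX=\{X\}$ and $\bZ=\cone$ in (G5) and read off the conclusion.

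First I would check that the hypotheses of (G5) are satisfied for this choice. Since $X\in\Lp(\R^d)$, the singleton $\{X\}$ is an (a.s.\ nonempty) $p$-integrable random closed convex set; and $\Gae(\{X\}|\ssalg)$ is nonempty by assumption, which is precisely the side condition appearing in (G5). The deterministic cone $\cone$ is a closed convex set, and it is $p$-integrable because it contains the origin, so the constant random vector $0$ is a $p$-integrable selection of it. Hence $\bZ=\cone$ is an admissible translation in (G5), and the conclusion reads $\Gae(\{X\}+\cone|\ssalg)=\Gae(\{X\}|\ssalg)+\cone$, which is the assertion once we note $\{X\}+\cone=X+\cone$.

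The one point that warrants a closer look — and which I expect to be the only mild obstacle — is that $\cone$ is unbounded, so its support function $h(\cone,W)=h(\bZ,W)$ takes the value $+\infty$ on the $\ssalg$-measurable event $\{W\notin\cone^o\}$. I would therefore confirm that the support-function computation underpinning the proof of (G5),
\[
  \gae\big(h(\{X\}+\cone,W)\,\big|\,\ssalg\big)
  =\gae\big(\langle X,W\rangle+h(\cone,W)\,\big|\,\ssalg\big)
  =\gae\big(\langle X,W\rangle\,\big|\,\ssalg\big)+h(\cone,W),
\]
remains valid in the extended-real setting: on $\{W\in\cone^o\}$ we have $h(\cone,W)=0$ and this is the ordinary conditional translation identity, while on $\{W\notin\cone^o\}$ both sides equal $+\infty$. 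This is justified by the extension of conditional translation equivariance to arbitrary $\ssalg$-measurable (possibly $+\infty$-valued) shifts noted in Remark~\ref{remark:extension}, together with $\gae(\infty)=\infty$ recorded in the remark following Definition~\ref{def:subl}. With this in place there is nothing further to do; the sublinearity of $\gae$ is used only to situate the statement within the theory of set-valued sublinear expectations and, via Proposition~\ref{prop:singelton}(ii), to control the shape of $\Gae(\{X\}|\ssalg)$ when it is nonempty.
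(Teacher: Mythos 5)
Your proposal is correct and matches the paper's approach exactly: the paper derives Proposition~\ref{prop:plus-cone} as an immediate consequence of the translation equivariance property (G5) applied with $\bX=\{X\}$ and $\bZ=\cone$. Your additional verification that $\cone$ is a $p$-integrable random closed convex set and that the support-function identity survives the $+\infty$ values on $\{W\notin\cone^o\}$ is a careful (and welcome) elaboration of details the paper leaves implicit.
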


With this result, the examples from
Section~\ref{sec:random-singletons} can be easily reformulated for the
case of random translations of deterministic cones. 

\begin{example}
  Let $\gae$ be a sublinear expectation which is sensitive with
  respect to infinity, e.g., the right-average quantile or expectile
  with $\tau\geq1/2$. Assume that $\cone^o$ is a subset of
  $\R_+^d$. Then
  \begin{displaymath}
    \gae(h(X+\cone,w))=\gae(\langle X,w\rangle)
    \leq \langle \gae(X),w\rangle=h(\gae(X)+\cone,w),
    \quad w\in \cone^o.
  \end{displaymath}
  Thus, $\Gae(X+\cone)=\gae(X)+\cone$. In particular, if
  $\gae=\ve_\alpha$ is the right-average quantile, then $\Gae(X+\cone)$ is
  the sum of the zonoid-trimmed region of $X$ and the cone $\cone$.
\end{example}

\subsection{Random cones translated by a deterministic point}
\label{sec:random-cones-point}

Let $\bX=x+\bC$, where $x$ is deterministic and $\bC$ is a random
cone. Since the gauge is translation equivariant, 
$\Gae(\bX|\ssalg) = x+\Gae(\bC|\ssalg)$.  So we assume without loss of
generality that $x=0$ and let $\bX=\bC$. The constant preserving
property (G2) implies that $\Gae(\bC|\ssalg)=\bC$ if $\bC$ is
$\ssalg$-measurable.

Since $\BX=\bC^o$, we have $h(\bC,W)=0$ if $W$ is a selection of
$\bC^o$. Otherwise, $h(\bC,W)=\infty$ on the event
$\{W\notin \bC^o\}$.

\begin{proposition}
  \label{prop:cone-random}
  Assume that the chosen gauge satisfies (g9). Then, for the corresponding
  set-valued conditional gauge, we have 
  \begin{equation}
    \label{eq:cone-core}
    \Gae(\bC|\ssalg)=(\core(\bC^o|\ssalg))^o. 
  \end{equation}
\end{proposition}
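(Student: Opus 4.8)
The plan is to compute the defining intersection for $\Gae(\bC|\ssalg)$ and to recognise it as the polar of the conditional core of $\bC^o$. Fix a sub-$\sigma$-algebra $\ssalg$. Recall from the discussion preceding the statement that for $W\in\Lp[\infty](\R^d;\ssalg)$ we have $h(\bC,W)=0$ on the event $\{W\in\bC^o\}$ and $h(\bC,W)=\infty$ on $\{W\notin\bC^o\}$. I would split the analysis of $\gae(h(\bC,W)|\ssalg)$ accordingly. On the $\ssalg$-measurable event $A_W:=\{\P(W\in\bC^o\mid\ssalg)=1\}$, the random variable $h(\bC,W)$ equals $0$ conditionally on $\ssalg$, so by the constant preserving and conditioning properties $\gae(h(\bC,W)|\ssalg)=0$ there. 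On the complement $A_W^c=\{\P(W\notin\bC^o\mid\ssalg)>0\}$, property (g9) applied conditionally (the lemma following Proposition~\ref{prop:properties} that turns (g9) into a conditional statement) gives $\gae(h(\bC,W)|\ssalg)=\infty$, so $H_W(\gae(h(\bC,W)|\ssalg))=\R^d$ on that event. Consequently the half-space $H_W(\gae(h(\bC,W)|\ssalg))$ equals $\{x:\langle x,W\rangle\le 0\}$ on $A_W$ and $\R^d$ on $A_W^c$; in particular it contains $x$ iff either $W\notin\bC^o$ with positive conditional probability or ($W\in\bC^o$ a.s.\ conditionally and $\langle x,W\rangle\le 0$).

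Next I would reorganise the intersection over $W$. The set $\bY'$ of \eqref{eq:5} consists of those $x$ such that for every $W\in\Lp[\infty](\R^d;\ssalg)$, on the event $A_W$ we have $\langle x,W\rangle\le 0$. Only the "restricted" directions matter: equivalently, $x\in\bY'$ iff $\langle x,W\rangle\le 0$ a.s.\ for every $W\in\Lp[\infty](\R^d;\ssalg)$ which is an $\ssalg$-measurable selection of $\bC^o$ (redefining a general $W$ to be zero off $A_W$ does not change the constraint, since $H_0=\R^d$). By Theorem~4.12 of \cite{lep:mol19}, the conditional core $\core(\bC^o|\ssalg)$ is the largest $\ssalg$-measurable random closed convex set whose support function in every direction is at most that of $\bC^o$; and, crucially, $\core(\bC^o|\ssalg)$ is again a cone (being the largest $\ssalg$-measurable convex subcone of $\bC^o$, as a conditional intersection of the cone $\bC^o$). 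Thus $\langle x,W\rangle\le 0$ a.s.\ for all $\ssalg$-measurable selections $W$ of $\bC^o$ is, by the polarity/selection characterisation of inclusions from \citet[Corollary~3.6]{lep:mol19} together with a Castaing representation of $\core(\bC^o|\ssalg)$, equivalent to $x\in(\core(\bC^o|\ssalg))^o$. Hence the deterministic-in-$\omega$ realisation-wise intersection $\bY'$ already equals the $\ssalg$-measurable random closed convex set $(\core(\bC^o|\ssalg))^o$, so it coincides with its largest $\ssalg$-measurable subset $\bY=\Gae(\bC|\ssalg)$, giving \eqref{eq:cone-core}.

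The step I expect to be the main obstacle is the reduction from "all $\ssalg$-measurable $W$" to "all $\ssalg$-measurable selections of $\bC^o$", and then matching the resulting constraint with the polar of the conditional core rather than with the (smaller) polar obtained by intersecting over all $\salg$-measurable directions. One has to argue carefully that an arbitrary $\ssalg$-measurable $W$ contributes a nontrivial constraint only through its restriction $W\one_{A_W}$, which is an $\ssalg$-measurable selection of $\bC^o$, and conversely that every $\ssalg$-measurable selection of $\bC^o$ arises this way; this is where the conditional version of (g9) is indispensable, because without it the $\infty$-valued branch would not collapse to the whole space. A secondary subtlety is verifying that $\core(\bC^o|\ssalg)$ is indeed a cone so that the ordinary bipolar theorem (applied pathwise, with measurability handled via Castaing representations as in the proof of Theorem~\ref{thr:lep-mol-bis}) yields $(\core(\bC^o|\ssalg))^o{}^o=\core(\bC^o|\ssalg)$ and hence that taking the polar loses no information; this closes the circle between the two descriptions.
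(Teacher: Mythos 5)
Your proposal is correct and follows essentially the same route as the paper: use (g9) to show that only directions $W$ with $\Prob{W\in\bC^o|\ssalg}=1$ contribute, identify these (after the harmless truncation $W\one_{A_W}$) with the $\ssalg$-measurable selections of $\bC^o$, i.e.\ the selections of $\core(\bC^o|\ssalg)$, and recognise the resulting intersection of half-spaces $H_W(0)$ as the polar of the conditional core. The only cosmetic difference is that you conclude by observing that the intersection $\bY'$ is already $\ssalg$-measurable, whereas the paper instead verifies directly that $(\core(\bC^o|\ssalg))^o$ satisfies \eqref{eq:6} and is therefore contained in the largest such set.
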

\begin{proof}
  Due to (g9), for any $W\in \Lp[\infty](\R^d;\ssalg)$, on the event
  $\big\{\Prob{W\notin \bC^o|\ssalg}>0\big\}$ the conditional gauge is
  $\gae(h(\bC,W)|\ssalg)=\infty$.  Therefore, $\Gae(\bC|\ssalg)$
  becomes the intersection of $H_{W}(0)$ over all
  $W\in\Lp[\infty](\R^d;\ssalg)$ such that $\Prob{W\in\bC^o|\ssalg}=1$
  almost surely.  In particular, it is possible to take $W$ which
  almost surely belongs to $\bC^o$. The family of such $W$ is the
  family of selections of the conditional core $\core(\bC^o|\ssalg)$,
  so that the intersection becomes the right-hand side of
  \eqref{eq:cone-core}. It remains to notice that for each
  $W\in\Lp[\infty](\R^d;\ssalg)$, the support function of
  $(\core(\bC^o|\ssalg))^o$ in direction $W$ is at most
  $\gae(h(\bC,W)|\ssalg)$. 
\end{proof}

% Note that also
%   $W'\in \Lp[\infty](\R^d;\ssalg)$.  Moreover,
%   $\P(W'\in \bC^o|\ssalg)=1$ almost surely, which implies that
%   $\P(W'\in \bC^o)=1$.  In conclusion, we write $\Gae(\bC|\ssalg)$ as
%   the intersection over all $W'\in \Lp[\infty](\R^d;\ssalg)$ such that
%   $W'\in\bC^o$ almost surely. The family of such $W'$ is the family of
%   selections of the conditional core $\core(\bC^o|\ssalg)$, which
%   implies the result.}
% \end{proof}

In the unconditional setting and assuming that the chosen gauge
function satisfies (g9), $\Gae(\bC)$ is the polar cone to the set of
fixed points of $\bC^o$.  
By \citet[Proposition~5.5]{lep:mol19}, $(\core(\bC^o|\ssalg))^o$ equals
the conditional convex hull $\chull(\bX|\ssalg)$ of $\bC$. 
Since the essential supremum also satisfies (g9), we obtain as a
corollary of Proposition~\ref{prop:cone-random} that
$\esssup(\bC|\ssalg)$ equals the conditional convex hull of $\bC$,
which also follows from \eqref{eq:chull-cond}.

If $\gae$ is the essential infimum, then the situation is
different. Then, $\essinf(h(\bC,W)|\ssalg)=\infty$ on the
event $\big\{\Prob{W\notin\bC^o|\ssalg}=1\big\}$, meaning that
$\Prob{h(\bC,W)=\infty|\ssalg}=1$ a.s.\ and implying that
$W \notin\bC^o$ almost surely. This is the polar set to
$\esssup(\bC^o|\ssalg)$ and so
\begin{displaymath}
  \essinf(\bC|\ssalg)=(\esssup(\bC^0|\ssalg))^o=\core(\bC|\ssalg). 
\end{displaymath}
In particular, $\essinf(\bC)$ is the set of fixed points of $\bC$.

%polar cone to the support of $\bC^o$, which is the set of all $x$ such
%that $\Prob{x\in\bC^o}>0$.

From our list of examples of gauge functions, the only ones not
satisfying (g9) are the quantiles and left-average quantiles. So first
suppose that $\gae=\quantile^-$. Consider the random variable
$\zeta_t$, where $\zeta_t=\infty$ with probability $1-t$ and otherwise
$\zeta_t=0$ for $t\in(0,1)$. Define $r(t) =
\quantile^-(\zeta_t)$. Then $r(t)=0$ if $t\ge \alpha$ and oherwise
$r(t)=\infty$. Thus, in the unconditional setting for $w\in\R^d$,
\begin{displaymath}
  \quantile^-(h(\bC,w))=r\big(\Prob{w\in\bC^o}\big)=
  \begin{cases}
    0, & \text{if}\; \Prob{w\in\bC^o}\ge\alpha,\\
    \infty, & \text{otherwise}. 
  \end{cases}
\end{displaymath}
This set-valued gauge is the polar to the cone of all $w\in \R^d$ such
that $\Prob{w\in\bC^o}\ge\alpha$.

The set
\begin{equation}
  \label{eq:Qalpha}
  Q_\alpha(\bC^o)=\big\{w: \Prob{w\in\bC^o}\geq \alpha\big\}
\end{equation}
is called the Vorob'ev quantile of $\bC^o$ at level $\alpha$, see
\citet[Section~2.2.2]{mo1}. Note that $Q_\alpha(\bC^o)$ is itself a
cone and $Q_\alpha(\bC^o)$ converges to the set of all $x$ such that
$\Prob{x\in\bC^o}>0$ as $\alpha\downarrow0$. Thus,
\begin{displaymath}
  \quantile^-(\bC)=\big(Q_\alpha(\bC^o)\big)^o,
\end{displaymath}
that is, the gauge of $\bC$ is the polar set to
$Q_\alpha(\bC^o)$. The same expression holds for left-average
quantiles, since $\ue_\alpha(\zeta_t)=\quantile^-(\zeta_t)$.

% \begin{example}[Random half-space]
%   Consider the case when $\bX=H_V(\beta)$ is a random half-space,
%   where $V\in\Lp[0](\Sphere)$ and $\beta\in\Lp(\R)$.  Then
%   $h(\bX,V)=\beta$, and $h(\bX,W)=\infty$ if $W\neq V$ with a positive
%   probability. Hence,
%   \begin{displaymath}
%     \gae(h(\bX,W)|V)=\gae(\beta \one_{\{W = V\}}+\infty\one_{\{W \neq V\}}|V).
%   \end{displaymath}
%   This implies that
%   \begin{displaymath}
%     \Gae(\bX|V)=H_V(\gae(\beta|V)). 
%   \end{displaymath}
% \end{example}

\subsection{Random translations of random cones}
\label{sec:rand-transl-rand}

Let $\bX=X+\bC$ for a $p$-integrable random vector $X$ and a (possibly
random) cone $\bC$. There are two natural ways to introduce the
conditional $\sigma$-algebra $\ssalg$ in this setting. If $\ssalg$ is
generated by $X$ (more generally, if $X$ is $\ssalg$-measurable), then
(G5) yields that
\begin{displaymath}
  \Gae(X+\bC|\ssalg)=X+\Gae(\bC|\ssalg). 
\end{displaymath}
Then the arguments from Section~\ref{sec:random-cones-point} apply.
If $\bC$ is $\ssalg$-measurable, then (G5) yields that
\begin{displaymath}
  \Gae(X+\bC|\ssalg)=\Gae(X|\ssalg)+\bC.
\end{displaymath}

In the remainder of this section we adapt the unconditional setting.
If (g9) holds, then 
\begin{displaymath}
  \Gae(X+\bC)=\bigcap_{w\in\Sphere}H_w(\gae(h(X+\bC,w))
  =\bigcap_{w\in\cone}H_w(\gae(\langle X,w\rangle)),
\end{displaymath}
where $\cone=\core(\bC^o)$ is the set of points $w$ which almost
surely belong to $\bC^o$.  Without (g9), the same holds with $\cone$
replaced by $Q_\alpha(\bC^o)$ from \eqref{eq:Qalpha}.

\begin{example}
  \label{ex:bid}
  Consider the convex cone $\bC$ in $\R^2$ with points $(-\kappa_1,1)$
  and $(1,-\kappa_2)$ on its boundary, where $\kappa_1$, $\kappa_2$
  are positive random variables such that $\kappa_1\kappa_2\geq1$ a.s.
  This cone describes the family of portfolios on two assets available
  at price zero and such that $\kappa_1$ units of the first asset can
  be exchanged for one unit of the second asset and also $\kappa_2$
  units of the second asset can be exchanged for a single unit of the
  first asset. The higher the value of $\kappa_1\kappa_2$ is, the more
  transaction costs are paid when exchanging the assets. If the gauge
  function satisfies (g9), then $\Gae(\bC)=(\core(\bC^o))^o$. If
  $\essinf\kappa_i=k_i$, $i=1,2$, and $k_1k_2\geq1$, then
  $\core(\bC^o)$ is the cone with $(k_2,1)$ and $(1,k_1)$ on its
  boundary, so that $\Gae(\bC)$ is the cone with points $(1,-k_2)$ and
  $(-k_1,1)$ on its boundary. If $k_1k_2<1$, then $\core(\bC^o)=\R_+^2$ and so
  $\Gae(\bC)=\{0\}$.  If the essential suprema of $\kappa_1$ and
  $\kappa_2$ are infinite, then $\essinf(\bC)=\R_-^2$.  If $\gae$ is a
  quantile, then $\Gae(\bC)$ is the polar to the Vorob'ev quantile of
  $\bC^o$, which is given by the cone with points
  $(1,\quantile^-(\kappa_1))$ and $(\quantile^-(\kappa_2),1)$ on its
  boundary.

  Assume that $\bX=X+\bC$ for $X\in\Lpb(\R^d)$. Then
  $h(\bX,w)=\langle X,w\rangle$ if $w\in\bC^o$ and otherwise
  $h(\bX,w)=\infty$. If the gauge function satisfies (g9), then
  $\Gae(\bX)$ is the sum of $\Gae(\{X\}$ (which is the corresponding
  depth-trimmed region of $X$) and $\Gae(\bC)$ as described above. If
  the gauge is a quantile $\quantile^-$ we have
  \begin{displaymath}
    \quantile^-(h(\bX,w))
    =\inf\big\{t\in\R: \Prob{\langle X,w\rangle \leq t,w\in\bC^o}\geq
    \alpha\big \}. 
  \end{displaymath}
\end{example}

\begin{example}
  Let $X$ be normally distributed in $\R^d$ with mean $\mu=(1,2)$, the
  standard deviations $\sigma_1=0.3$, $\sigma_2=0.5$ and the
  correlation coefficient $\rho=0.6$. Denote by $\Sigma$ the
  covariance matrix of $X$ and note that 
  \begin{displaymath}
    \Sigma= \begin{pmatrix}
    0.09&0.09\\0.09&0.25
    \end{pmatrix}\,.
  \end{displaymath}
  Let $\bC$ be a random cone that
  contains $\R_+^2$ and that has points $(-1,\pi)$ and $(\pi,-1)$ on its
  boundary, where $(\pi-2)$ is lognormally
  distributed with mean zero and the volatility
  $\sigma=0.2$. Note that
  $\bC^o$ is the cone with points
  $(-\pi,-1)$ and $(-1,-\pi)$ on its boundary, so that
  $\core(\bC)^o=\cone$ is the cone with points $(-2,-1)$ and $(-1,-2)$ on
  its boundary. 
  Below we calculate $\Gae(X+\bC)$ for different choices
  of the underlying gauge: the expectation, the quantile and a lower 
  average quantile with $\alpha=0.1$, the upper average quantiles with
  $\alpha=0.9$, and the norm-based one with $p=2$ and $a=1$.  If $u\in \cone$ and $\zeta$ is a standard normal random variable, then
  \begin{align*}
    \E h(X+\bC,u)
    &=\langle \mu,u\rangle,\\
    \ve_{0.9}(h(X+\bC,u))
    &=\langle\mu,u\rangle+\langle \Sigma u,u\rangle^{1/2}
      \ve_{0.9}(\zeta)
      =\langle\mu,u\rangle+\langle \Sigma u,u\rangle^{1/2}
      (1-\alpha)^{-1}\varphi(\Phi^{-1}(\alpha))\\
    &\approx\langle\mu,u\rangle+1.75 \langle \Sigma u,u\rangle^{1/2},\\
    \ue_{0,1}(h(X+\bX,u))
    &=\langle\mu,u\rangle+\langle \Sigma u,u\rangle^{1/2}
       \ue_{0,1}(\zeta) = \langle\mu,u\rangle-
      \langle \Sigma u,u\rangle^{1/2}\alpha^{-1}\varphi(\Phi^{-1}(\alpha)) \\
      &\approx \langle\mu,u\rangle-1.75 \langle \Sigma u,u\rangle^{1/2},\\
    \ve^{2,1}(h(X+\bC,u))
    &=\langle\mu,u\rangle+(\E \langle
      X,u\rangle_+^2)^{1/2}=\langle\mu,u\rangle
      +\langle \Sigma u,u\rangle^{1/2}/\sqrt{2}. 
  \end{align*}
  All these gauges are infinite for all $u\notin\cone$. Thus,
  $\E(X+\bC)=\mu+\cone^o$. 

  The Vorob'ev quantile $Q_\alpha(\bC^o)$ at level $\alpha=0.1$ of
  $\bC^o$ is the cone which is a subset of $\R_-^2$ and has points
  $(-2.77,-1)$ and $(-1,-2.77)$ on its boundary. If
  $u\in Q_\alpha(\bC^o)$, then
  \begin{align*}
   \mathsf{q}^-_{0.1}(h(X+\bX,u))
    &= \langle\mu,u\rangle+\langle \Sigma u,u\rangle^{1/2}
      \Phi^{-1}(\alpha) \approx \langle\mu,u\rangle-1.28 \langle \Sigma u,u\rangle^{1/2}.
  \end{align*}
\end{example}

\section*{Acknowledgements}

The authors are grateful to Ignacio Cascos for comments on earlier
versions of this paper.

\section*{Compliance with ethical standards}

No funds, grants, or other support was received.
The authors have no relevant financial or non-financial interests to disclose.
The authors declare that they have no Conflict of interest.

%\bibliographystyle{mmor}
%\bibliography{TFIM}

\end{document}